\numberwithin{equation}{section} 
\newenvironment{pdeq}{ \left\{ \begin{aligned}}{\end{aligned}\right.}
\newcommand{\np}[1]{(#1)}
\newcommand{\nb}[1]{[#1]}
\newcommand{\bp}[1]{\big(#1\big)}
\newcommand{\bcp}[1]{\big\{#1\big\}}
\newcommand{\bb}[1]{\big[#1\big]}
\newcommand{\Bp}[1]{\bigg(#1\bigg)}
\newcommand{\Bb}[1]{\bigg[#1\bigg]}
\newcommand{\calb}{{\mathcal B}}
\newcommand{\calt}{{\mathcal T}}
\newcommand{\R}{\mathbb{R}}
\newcommand{\C}{\mathbb{C}}
\newcommand{\Z}{\mathbb{Z}}
\newcommand{\convolutionpergroup}{*}
\newcommand{\convolutioneuclidean}{*_{\R^2}}
\DeclareMathOperator{\e}{e}
\DeclareMathOperator{\Div}{div}
\DeclareMathOperator{\trace}{Tr}
\DeclareMathOperator{\realpart}{Re}
\newcommand{\ra}{\rightarrow}
\newcommand{\set}[1]{\ensuremath{\{#1\}}}
\newcommand{\setc}[2]{\ensuremath{\{#1\ \lvert\ #2\}}}
\newcommand{\setcl}[2]{\ensuremath{\bigl\{#1\ \lvert\ #2\bigr\}}}
\newcommand{\quotientmap}{\pi}
\newcommand{\grp}{{\torus_\per\times\R^2}}
\newcommand{\grpper}{{\torus_\per\times\R^2}}
\newcommand{\dualgrp}{\Z\times\R^2}
\newcommand{\torus}{{\mathbb T}}
\newcommand{\torusper}{{{\mathbb T}_\per}}
\newcommand{\torusone}{{{\mathbb T}_1}}
\newcommand{\transpose}{\top}
\newcommand{\idmatrix}{I}
\newcommand{\grad}{\nabla}
\newcommand{\pdax}{\partial_{x_j}}
\newcommand{\dx}{{\mathrm d}x}
\newcommand{\ds}{{\mathrm d}s}
\newcommand{\dt}{{\mathrm d}t}
\newcommand{\dy}{{\mathrm d}y}
\newcommand{\dxi}{{\mathrm d}\xi}
\newcommand{\SR}{\mathscr{S}}
\newcommand{\TDR}{\mathscr{S^\prime}}
\newcommand{\FT}{\mathscr{F}}
\newcommand{\iFT}{\mathscr{F}^{-1}}
\newcommand{\torusmultiplier}{m_x}
\newcommand{\reallinemultiplier}{M_x}
\newcommand{\hgamma}{h_\gamma}
\newcommand{\ggamma}{g_\gamma}
\newcommand{\besselK}{K}
\newcommand{\besseleone}{e_1}
\newcommand{\besseletwo}{e_2}
\newcommand{\besselej}{e_j}
\newcommand{\norm}[1]{\lVert#1\rVert}
\newcommand{\snorm}[1]{{\lvert #1 \rvert}}
\newcommand{\snorml}[1]{{\bigl\lvert #1 \big\rvert}}
\newcommand{\CR}[1]{C^{#1}}  
\newcommand{\LR}[1]{L^{#1}}
\newcommand{\CRi}{\CR \infty}
\newcommand{\wspace}[1]{X_{#1}}
\newcommand{\wspacelog}[1]{X^{\textrm{log}}_{#1}}
\newcommand{\nsnonlinb}[2]{#1\cdot\grad #2}
\newcommand{\fundsolstokesvel}{\varGamma^{\textnormal{\tiny{S}}}}
\newcommand{\fundsoltpvelper}{\varGamma^{\per}}
\newcommand{\fundsoltppresper}{\gamma^{\per}}
\newcommand{\fundsolcomplper}{\varGamma^{\per,\bot}}
\newcommand{\fundsolcomplone}{\varGamma^{1,\bot}}
\newcommand{\fundsolssrk}{\varGamma^{k}}
\newcommand{\fundsolssreta}{\varGamma^{\eta}}
\newcommand{\tin}{\text{in }}
\newcommand{\tas}{\text{as }}
\newcommand{\half}{\frac{1}{2}}
\renewcommand{\epsilon}{\varepsilon}
\renewcommand{\phi}{\varphi}
\newcommand{\per}{\calt}
\newcommand{\perf}{\frac{2\pi}{\per}}
\newcommand{\perone}{2\pi}
\newcommand{\rotmatrix}{Q}
\newcommand{\bigo}{O}
\newcommand{\cutoff}{\chi}
\newcommand{\angvel}{a}
\newcommand{\onedist}{1}
\newcommand{\xrot}{x^\bot}
\newcommand{\body}{\calb}
\newcommand{\entireP}{P}
\newcommand{\entireQ}{Q}
\newcommand{\intmatrix}[1]{M^{#1}}
\newcommand{\newCCtr}[2][d]{
\newcounter{#2}\setcounter{#2}{0}
\expandafter\xdef\csname kyedtheconst#2\endcsname{#1}
}
\newcommand{\Cc}[2][nolabel]{
\stepcounter{#2}
\expandafter\ensuremath{\csname kyedtheconst#2\endcsname_{\arabic{#2}}}
\ifthenelse{\equal{#1}{nolabel}}
{}
{\expandafter\xdef\csname kyedconst#1\endcsname
{\expandafter\ensuremath{\csname kyedtheconst#2\endcsname_{\arabic{#2}}}}}
}
\newcommand{\Ccn}[2][nolabel]{
\expandafter\ensuremath{\csname kyedtheconst#2\endcsname}
\ifthenelse{\equal{#1}{nolabel}}
{}
{\expandafter\xdef\csname kyedconst#1\endcsname
{\expandafter\ensuremath{\csname kyedtheconst#2\endcsname}}}
}
\newcommand{\CcSetCtr}[2]{
\setcounter{#1}{#2}
}
\newcommand{\Cclast}[1]{
\expandafter\ensuremath{\csname kyedtheconst#1\endcsname_{\arabic{#1}}}
}
\newcommand{\Ccllast}[1]{
\addtocounter{#1}{-1}
\expandafter\ensuremath{\csname kyedtheconst#1\endcsname_{\arabic{#1}}}
\addtocounter{#1}{1}
}
\newcommand{\const}[1]{
\expandafter{\ifcsname kyedconst#1\endcsname
  \csname kyedconst#1\endcsname
\else
  \errmessage{Undefined Kyedconstant #1.}%
\fi}
}
\newcommand{\velrotstokes}{v}
\newcommand{\velrotstokesrot}{v^\bot}
\newcommand{\presrotstokes}{q}
\newcommand{\velsstokes}{v}
\newcommand{\pressstokes}{q}
\newcommand{\veltp}{w}
\newcommand{\prestp}{\pi}
\newcommand{\rhssstokes}{f}
\newcommand{\rhssstokestensor}{F}
\newcommand{\rhsrot}{f}
\newcommand{\rhsrottensor}{F}
\newcommand{\rhstpgeneric}{h}
\newcommand{\rhstpgenerictensor}{H}
\newcommand{\rhstpproj}{\overline{h}}
\newcommand{\rhstpprojtensor}{\overline{H}}
\newcommand{\restfunc}{r}
\newcommand{\restfunctensor}{R}
\renewcommand{\ln}{\log}
\theoremstyle{plain}
\newtheorem{thm}{Theorem}[section]
\newtheorem{lem}[thm]{Lemma}
\theoremstyle{remark}
\begin{document}
\title{On the asymptotic structure of steady Stokes and Navier-Stokes flows around a rotating two-dimensional body}

\author{
Toshiaki Hishida\thanks{
Supported in part by Grant-in-Aid for Scientific Research 18K03363
from JSPS}\\
Graduate School of Mathematics\\
Nagoya University\\
Nagoya 464-8602, Japan\\
Email: {\texttt hishida@math.nagoya-u.ac.jp}
\and
Mads Kyed\\ 
Fachbereich Mathematik\\
Technische Universit\"at Darmstadt\\
Schlossgartenstr. 7, 64289 Darmstadt, Germany\\
Email: {\texttt kyed@mathematik.tu-darmstadt.de}
}

\date{\today}
\maketitle

\begin{abstract}
We establish pointwise decay estimates for the velocity field of a steady two-dimensional Stokes flow around a rotating body via a new approach
rather than analysis adopted in the previous literature \cite{HMN}, \cite{Hi}.
The novelty is to analyze the singular behavior of the constants in these estimates with respect to the angular velocity of the body,
where such singularity is reasonable on account of the Stokes paradox.
We then employ the estimates to identify the asymptotic structure 
at infinity of a steady scale-critical Navier-Stokes flow, 
being assumed to be small, around a rotating body.
It is proved that the leading term is given by a self-similar Navier-Stokes
flow which exhibits a circular profile $x^\perp/|x|^2$ and whose
coefficient is the torque acting on the body.
\end{abstract}

\noindent\textbf{MSC2010:} Primary 35Q30, 76D05, 76D07, 35B40, 35C20\\
\noindent\textbf{Keywords:} Navier-Stokes, Stokes, rotating body, asymptotic expansion

\newCCtr[C]{C}
\newCCtr[M]{M}
\newCCtr[\epsilon]{eps}
\CcSetCtr{eps}{-1}
\newCCtr[c]{c}
\let\oldproof\proof
\def\proof{\CcSetCtr{c}{-1}\oldproof} 

\section{Introduction and the main result}
\label{intro}

Consider the flow of an incompressible viscous fluid, governed by the Navier-Stokes equations, around a two-dimensional
rigid body, which occupies a simply connected bounded domain
$\body\subset\R^2$.
The fluid then occupies the exterior domain
$\Omega:=\R^2\setminus\overline{\body}$, whose boundary
$\partial\Omega=\partial\body$ we assume to be sufficiently smooth.
Analysis of the asymptotic behavior at spatial infinity of a steady Navier-Stokes flow 
in 2D is very challenging and substantially more difficult than the corresponding 3D problem.
One of the difficulties stems from the Stokes paradox,
which states that a 2D Stokes flow cannot be bounded near infinity
unless the net force vanishes (see Chang and Finn \cite{CF}). 
The Stokes paradox is rooted in the lack of decay of the 2D Stokes fundamental solution, which actually grows
logarithmically.
Therefore, the Stokes linearization is not well suited as a basis for investigation of
the nonlinear Navier-Stokes problem in this case.
Although one can find a solution in the Leray class (with finite Dirichlet integral) to the steady-state 2D Navier-Stokes equations if a prescribed boundary condition at infinity is disregarded,
see the celebrated paper by Leray \cite{L},
the lack of a suitable linearization means that very little is known about its asymptotic behavior at spatial infinity. Indeed, this question
remains one of the outstanding open problems in the field of mathematical fluid mechanics to date.

When the body $\body$ is translating with constant velocity, the steady
motion in a frame attached to the moving body is governed
by the Navier-Stokes equations with an Oseen term. The linearization hereof is an Oseen system rather than a Stokes system.
Since the Oseen fundamental solution has an anisotropic decay
structure (with wake), the Stokes paradox is not an issue in this case.
Moreover, the Oseen fundamental solution describes the leading profile at infinity
of a Navier-Stokes flow in the Leray class without any smallness
condition; see Galdi \cite[XII.8]{G-b}.

In this paper we consider a different motion of the body $\body$, namely rotation with a constant angular velocity.
If the body $\body$ is rotating with constant angular velocity
$\angvel\in\R\setminus\{0\}$, the motion of the flow is governed by the Navier-Stokes system
\[
\partial_tv+v\cdot\nabla_yv=\Delta_yv-\nabla_yq+g, \qquad
\Div_y v=0
\]
in $\Omega(t)=\{y=Q(t)x|\, x\in\Omega\}$, where
\begin{equation}
\rotmatrix(t)\coloneqq\left(
\begin{array}{cc}
\cos at & -\sin at \\
\sin at & \cos at
\end{array}
\right).
\label{rot-mat}
\end{equation}
Here, $v=(v_1(t,y),v_2(t,y))^\top$ and $q(t,y)$ denote the
unknown velocity field and pressure of the fluid, respectively,
while $g=(g_1(t,y),g_2(t,y))^\top$ is a given external force.
Throughout this paper, $(\cdot)^\top$ denotes the transpose of
vectors and matrices, and all vectors are column ones.
By a change of coordinates
\begin{equation}
u(t,x)\coloneqq Q(t)^\top v\bp{t,Q(t)x}, \quad
p(t,x)\coloneqq q\bp{t,Q(t)x},\quad
f(t,x)\coloneqq Q(t)^\top g\bp{t,Q(t)x},
\label{trans}
\end{equation}
we can rewrite the system in a frame attached to the body $\body$,
which then reads
\[
\partial_tu+u\cdot\nabla u
=\Delta u+a(x^\perp\cdot\nabla u-u^\perp)-\nabla p+f, \qquad
\Div u=0
\]
in the time-independent domain $\Omega$, where
$x^\perp \coloneqq (-x_2,x_1)^\top, u^\perp \coloneqq(-u_2,u_1)^\top$.
In this paper we assume that $f=f(x)$ is independent of $t$ and study
the steady problem
\begin{equation}
-\Delta u-a(x^\perp\cdot\nabla u-u^\perp)+\nabla p+u\cdot\nabla u=f, \qquad
\Div u=0
\label{NS}
\end{equation}
in $\Omega$.
Usually, the no-slip boundary condition
$u|_{\partial\Omega}=ax^\perp$ is imposed, but 
it is better to understand the asymptotic structure at spatial infinity
of solutions to \eqref{NS} only from the equation without specifying
the boundary condition at $\partial\Omega$.


In contrast to the case $a=0$, we have a chance to find a generic flow
that is at rest at infinity under an appropriate condition
on the external force $f$.
In fact, for the linearized system
\begin{equation}
-\Delta u-a(x^\perp\cdot\nabla u-u^\perp)+\nabla p=f, \qquad
\Div u=0,
\label{linear}
\end{equation}
it was discovered first by Hishida 
\cite{Hi} that the oscillation due to rotation of the body
leads to the resolution of the Stokes
paradox on account of the decay structure of the 
fundamental solution associated with \eqref{linear}.
More precisely, if $\{u,p\}$ satisfies \eqref{linear} in $\Omega$
and $u(x)=o(|x|)$ at infinity (to exclude polynomials except
constant vectors), one can show that
$u(x)-u_\infty=O(|x|^{-1})$ as $|x|\to\infty$ for some constant
vector $u_\infty\in\R^2$, and even the asymptotic representation
\begin{equation}
u(x)-u_\infty=M\frac{x^\perp}{4\pi |x|^2}+\beta\frac{-x}{2\pi |x|^2}
+o(|x|^{-1})
\qquad\mbox{as $|x|\to\infty$}
\label{linear-str}
\end{equation}
with
\[
M \coloneqq\int_{\partial\Omega}y^\perp\cdot\{(T(u,p)+au\otimes y^\perp)\nu\}d\sigma
+\int_\Omega y^\perp\cdot f\,dy, \qquad
\beta \coloneqq\int_{\partial\Omega}\nu\cdot u\,d\sigma,
\]
provided
$f(x)=o\bp{|x|^{-3}(\log |x|)^{-1}}$ as $|x|\to\infty$,
where
$T(u,p)=\nabla u+(\nabla u)^\top-p I$
denotes the Cauchy stress tensor
($I\in\mathbb R^{2\times 2}$ being the identity matrix)
and $\nu$ the outward unit normal to $\partial\Omega$.
The second term $-\beta x/(2\pi |x|^2)$ in \eqref{linear-str} is nothing but the flux carrier, and
by subtracting this carrier we can reduce the problem to the one 
subject to
\begin{equation}
\int_{\partial\Omega}\nu\cdot u\,d\sigma=0.
\label{zero-flux}
\end{equation}
Observe that the no-slip condition 
$u|_{\partial\Omega}=ax^\perp$ mentioned above
fulfills \eqref{zero-flux}.
We may thus conclude that the essential profile is the circular flow
$x^\perp/(4\pi |x|^2)$ in \eqref{linear-str}, and that the rate of decay
is controlled by the torque $M$ (not by the force).
The proof in \cite{Hi}
relies on a detailed analysis of the fundamental solution
(of two variables $x,\,y$ since the elliptic operator in \eqref{linear}
has a variable coefficient)
whose leading term for $|x|>2|y|$ is
$x^\perp\otimes y^\perp/(4\pi |x|^2)$.

Indeed this is linear analysis, but it is remarkable that
the profile in the asymptotic representation \eqref{linear-str},
more specifically, the pair
\begin{equation}
U(x) \coloneqq\frac{cx^\perp}{|x|^2}, \qquad
P(x) \coloneqq\frac{-c^2}{2|x|^2} \qquad (c\in\mathbb R),
\label{circu}
\end{equation}
is itself a homogeneous Navier-Stokes flow in $\mathbb R^2\setminus\{0\}$
(of degree $(-1)$ for the velocity),
that is, a self-similar Navier-Stokes flow in 2D.
The pair $\set{U,P}$ also solves \eqref{NS} with $f=0$ in
$\mathbb R^2\setminus\{0\}$ since
$x^\perp\cdot\nabla U=U^\perp$.
Regardless of spatial dimension, for steady Navier-Stokes flows
that decay to zero with the scale invariant rate $O\bp{|x|^{-1}}$,
the nonlinearity is balanced with the linear part. For such flows,
one may expect that its leading term at infinity is described by
a certain self-similar flow, even if the magnitude of the flow is large.
This is indeed the case for small Navier-Stokes flows in 3D both
when the body is at rest \cite{KSv11, NaPi, KMT, Hi-hb}
and when the body is rotating with a constant angular velocity
\cite{FH11-2, FGK}. We further refer to the paper \cite{S} by
\v Sver\'ak, who gave an insight into self-similar Navier-Stokes flows.
In contrast, in the case of a body translating with constant velocity the Oseen fundamental
solution is the leading profile even for large Navier-Stokes flows; see \cite{G-b} as well as \cite{Fi73}
and the references therein.

For the 2D problem under consideration here, the linear analysis developed in \cite{Hi}
is not sufficient to analyze the Navier-Stokes system \eqref{NS}
because the estimate in \cite{Hi} of the remainder term in the asymptotic representation \eqref{linear-str} with respect to the angular velocity
is too singular like $O\bp{|a|^{-1}}$.
In a more recent paper, Higaki, Maekawa and Nakahara \cite{HMN} obtained 
a nice estimate of this remainder with less singular behavior
for $a\to 0$, and applied it to \eqref{NS}.
Roughly speaking, their theorem asserts that if $|a|$ is small and
the decaying force $f(x)$ of divergence form
is also small compared to some rate of $|a|$
(which is almost $|a|^{1/2}$),
problem \eqref{NS} in $\Omega$ subject to the no-slip condition
$u|_{\partial\Omega}=ax^\perp$ admits a unique solution $u(x)$
with leading profile
$x^\perp/|x|^2$ whose coefficient is the torque.
We also mention another existence theorem for \eqref{NS} 
with arbitrary $a\in\mathbb R\setminus\{0\}$ (together with
a boundary layer analysis for $|a|\to\infty$) due to
Gallagher, Higaki and Maekawa \cite{GHM} when the obstacle is exactly a disk.

The aim in the following is two-fold. Firstly, we introduce a new and simplified approach towards a linear
theory (a priori estimates in suitable function spaces) for \eqref{linear} that is optimal with respect to the singularity for
$a\ra 0$.
Secondly, we seek to employ these estimates to establish an asymptotic representation of given solution to 
\eqref{NS} that decays like $O\bp{\snorm{x}^{-1}}$. The latter is obtained under a smallness condition.

In the first part, we provide a different and considerably shorter proof
of the resolution of the Stokes paradox than the previous one in \cite{Hi}.
The strategy is to express a steady solution to \eqref{linear} in the coordinates of the inertial frame
using the transformation \eqref{rot-mat} (as was done first by Galdi \cite{G03}).
In the inertial frame of reference, the solution is time-periodic. After 
splitting this time-periodic solution into a steady part, which is given by the average over the period, and a 
purely periodic part, whose average over the period vanishes,
we obtain our result by analyzing each part separately.
This idea was adopted by Galdi \cite{G13} and
has been developed in terms of time-periodic fundamental
solutions introduced by Kyed \cite{Ky}.
Our procedure yields a very useful new estimate 
(rather than \cite[Theorem 3.1]{HMN})
for solutions to the linearized system \eqref{linear} in the whole plane $\mathbb R^2$,
see Theorem \ref{linear-thm},
when the torque of $f=f_0+\mbox{div $F$}$
with $F=(F_{ij})$ vanishes, that is,
\begin{equation*}
\int_{\mathbb R^2}y^\perp\cdot f_0\,\dy
+\int_{\mathbb R^2}(F_{12}-F_{21})\,\dy=0. 
\end{equation*}
The estimate reveals that the leading term in an asymptotic expansion of the
velocity field comes only from the steady part,
while the singular behavior with respect to $a\ra 0$ arises only from
the purely periodic part.
Due to zero average of this latter part,
its several fine decay properties for $|x|\to\infty$ have been
established in \cite{Ky} and \cite{EKy} via pointwise estimates of the
time-periodic Stokes fundamental solution.
However, the estimates in \cite{Ky} and \cite{EKy} are not sufficient to adequately describe the singular behavior with respect to $a\ra 0$.
For this purpose, one also needs the singular behavior
of the time-periodic fundamental solution around the origin $x=0$,
which is not provided in \cite{Ky} or \cite{EKy}, and indeed difficult to obtain in the time-periodic case (in contrast to classical
fundamental solutions).
In Lemma \ref{FundsolComplPointwiseDecayLem}, we establish such an estimate, 
which even describes simultaneously the decay at large
distance and around the origin.
Estimates of the purely periodic part with faster decay
rate involve more singular behavior for $a\to 0$ as the price.
Using Lemma \ref{FundsolComplPointwiseDecayLem} and a scaling argument,
we are able to quantify this trade-off, to be precise,
given $\delta\in (0,1)$, we find a reasonable
singular behavior to get the decay of the purely periodic part like
$O(|x|^{-(1+\delta)})$ uniformly in $t$,
see Lemma \ref{ConvolutionWithPurelyPeriodicFundSolLemma}.

In the second part of this paper, 
we consider arbitrary solutions to \eqref{NS} in $\Omega$ that decay
with the scale invariant rate $O\bp{|x|^{-1}}$
without specifying any boundary condition except \eqref{zero-flux}.
It is interesting to ask whether they exhibit
the same asymptotic structure as the solution constructed in \cite{HMN}
no matter how they are constructed.
As the main theorem of the paper, and as a nice application of the linear theory developed in the first part, we give an
affirmative answer (however, in the small) to this question.
\begin{thm}
Let $\Omega\subset\R^2$ be an exterior domain with $C^2$-boundary,
and let $a\in\mathbb R\setminus\{0\}$.
Given $\delta\in (0,1/2)$ and $R>e$ satisfying
$\mathbb R^2\setminus\Omega\subset B_R(0):=\{x\in\mathbb R^2|\,|x|<R\}$,
there are positive constants
$\kappa=\kappa(\delta)$ (independent of $R$ and $a$) and
$\mu=\mu(\delta,R)$ (independent of $a$) such that the following holds:
For every solution (smooth solution for simplicity)
$\{u,p\}\in H^1_{loc}(\overline\Omega)\times L^2_{loc}(\overline\Omega)$
to \eqref{NS} with
$f\in L^2_{loc}(\overline\Omega)$ subject to \eqref{zero-flux}
which satisfies
\begin{align}
\begin{pdeq}
&\displaystyle{\bp{1+|a|^{-\delta/2}}\sup_{|x|\geq R}|x||u(x)|\leq\kappa,} \\
&\displaystyle{(1+|a|^{-(\delta+1/2)})\sup_{|x|\geq R}|x|^{3+\delta}
|f(x)|\leq\kappa,} \\
&\bp{|a|+|a|^{-(\delta+1/2)}}\,|M|\leq\mu,\\
&\bp{|a|+|a|^{-(\delta+1/2)}}\,{\sup_{R<|x|<2R} \bp{\snorm{u(x)} + \snorm{\nabla u(x)} + \snorm{\nabla^2 u(x)} + \snorm{p(x)}}}\leq\mu,\\
\end{pdeq}
\label{small}
\end{align}
where 
\begin{equation}
M\coloneqq\int_{\partial\Omega}y^\perp\cdot\bcp{\bp{T(u,p)+au\otimes y^\perp-u\otimes u}\nu}d\sigma+\int_\Omega y^\perp\cdot f\,dy
\label{torque}
\end{equation}
(the total torque),
we have the asymptotic representation
\begin{equation}
u(x)=M\frac{x^\perp}{4\pi|x|^2}+O\bp{|x|^{-(1+\delta)}}\qquad
\mbox{as $|x|\to\infty$}.
\label{structure}
\end{equation}
\label{main}
\end{thm}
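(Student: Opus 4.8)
\emph{Proof strategy.} The plan is to subtract from $u$ the explicit self‑similar profile \eqref{circu} that carries the torque, to localize away from the body, and to feed the resulting whole‑plane linear problem into Theorem~\ref{linear-thm}. First I would set $(U,P)$ as in \eqref{circu} with $c\coloneqq M/(4\pi)$; since $x^\perp\cdot\nabla U=U^\perp$ and $U\cdot\nabla U+\nabla P=0$ in $\R^2\setminus\{0\}$, the pair $(U,P)$ solves \eqref{NS} (hence \eqref{linear}) with $f=0$ there. Next, fix $\chi\in C^\infty(\R^2)$ with $\chi\equiv0$ on $B_R(0)$ and $\chi\equiv1$ off $B_{2R}(0)$, put $v\coloneqq u-U$, and — correcting the divergence with the Bogovski\u\i\ operator on the annulus $R<|x|<2R$, which is admissible thanks to \eqref{zero-flux} — set $\bar v\coloneqq\chi v-\bogopr\nb{\nabla\chi\cdot v}$, extended by $0$; then $\bar v$ is solenoidal on $\R^2$ and equals $v$ for $|x|\ge2R$. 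Subtracting the equation for $(U,P)$ from \eqref{NS} and using $\Div u=\Div U=\Div v=0$ to write $u\cdot\nabla u-U\cdot\nabla U=\Div(u\otimes v+v\otimes U)$, one checks that $\bar v$ (with a suitably corrected pressure) solves \eqref{linear} in $\R^2$ with right‑hand side $\tilde f=\tilde f_0+\Div\tilde F$, where $\tilde F=-\chi(u\otimes v+v\otimes U)$ and $\tilde f_0$ collects $\chi f$ together with terms supported in $R<|x|<2R$ arising from the commutators of $\chi$ with the differential operator in \eqref{linear} and with $\bogopr$. By \eqref{small} one has $|u(x)|\le\kappa|x|^{-1}$ and, since $|a|+|a|^{-(\delta+1/2)}$ is bounded below, $|M|\lesssim\mu$, hence $|U|\lesssim|M|\,|x|^{-1}$; thus $\tilde F=O(|x|^{-2})$ with a small coefficient, $\tilde f_0$ decays like $|x|^{-3-\delta}$ outside $B_{2R}(0)$, and its annular part is controlled by $|M|$, $|M|^2$ and $\sup_{R<|x|<2R}(|u|+|\nabla u|+|\nabla^2u|+|p|)$ — hence small by \eqref{small}, the $a$‑weights there being calibrated to the singular constants of Theorem~\ref{linear-thm} and of Lemmas~\ref{FundsolComplPointwiseDecayLem} and~\ref{ConvolutionWithPurelyPeriodicFundSolLemma}.

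The crux is to show that $\tilde f$ has vanishing torque, so that Theorem~\ref{linear-thm} is applicable. I would argue that the torque is a flux conserved across circles enclosing $\R^2\setminus\Omega$, both for \eqref{NS} — this being the role of the summands $au\otimes y^\perp$ and $-u\otimes u$ in \eqref{torque} — and for \eqref{linear}: integrating \eqref{NS} against $y^\perp$ over the annulus between $\partial\Omega$ and a large circle shows that this conserved flux for $u$ equals $M$; a direct computation with the Cauchy stress of \eqref{circu} shows it equals $4\pi c=M$ for $U$; and the annular commutator terms contribute nothing to the flux through a circle in $|x|>2R$. Hence the torque of $\tilde f$, which is the asymptotic torque flux of $\bar v$, equals $M-M=0$.

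With this in hand, $\bar v$ is a solenoidal solution of \eqref{linear} in $\R^2$ with $\bar v=O(|x|^{-1})$ — in particular $\bar v=o(|x|)$ — and with right‑hand side $\tilde f=\tilde f_0+\Div\tilde F$ of vanishing torque in the admissible class. Theorem~\ref{linear-thm}, together with the uniqueness it contains (the resolution of the Stokes paradox), then identifies $\bar v$ with the solution it furnishes and gives $\bar v(x)=O(|x|^{-(1+\delta)})$ as $|x|\to\infty$; here the smallness in \eqref{small} enters to absorb $\Div(u\otimes v)$ as a small perturbation of the linear operator — a Neumann‑series / contraction argument in the weighted space of fields decaying like $|x|^{-(1+\delta)}$ — keeping the perturbation norm below $1$ uniformly, and the restriction $\delta\in(0,1/2)$ appears through the trade‑off, quantified in Lemma~\ref{ConvolutionWithPurelyPeriodicFundSolLemma}, between this decay rate and the constants that blow up as $a\to0$. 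Since $\bar v=u-U$ for $|x|\ge2R$ and $U=Mx^\perp/(4\pi|x|^2)$, this is exactly \eqref{structure}.

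I expect the main obstacle to be twofold. The genuinely hard analysis is confined to the linear theory — Theorem~\ref{linear-thm} and Lemmas~\ref{FundsolComplPointwiseDecayLem} and~\ref{ConvolutionWithPurelyPeriodicFundSolLemma} — i.e.\ the simultaneous description of the time‑periodic Stokes fundamental solution at infinity and around the origin with sharp dependence on $a$, which I am assuming here. Within the proof of Theorem~\ref{main} itself, the delicate points are the torque bookkeeping above and, above all, verifying that every constituent of $\tilde f$ — the quadratic tensor $u\otimes v+v\otimes U$ as well as the annular commutators — is small in precisely the norm, and with precisely the $a$‑weights, that the linear estimate consumes; this matching is what dictates the exact form of the four conditions in \eqref{small} and the range of $\delta$. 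The final identification of the given $\bar v$ with the solution produced by the linear theory is then routine.
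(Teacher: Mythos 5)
Your overall route is the same as the paper's: subtract the self-similar profile \eqref{circu} with $c=M/(4\pi)$, localize with a cut-off plus a Bogovski\u{\i} correction on the annulus (legitimate by \eqref{zero-flux} and by $\int_A U\cdot\nabla\varphi\,dx=0$), check that the resulting right-hand side has vanishing torque (your flux computation is the one the paper performs, and your tensor $u\otimes v+v\otimes U=u\otimes u-U\otimes U$ is indeed symmetric, so it contributes no torque), and then use Theorem \ref{linear-thm} with a contraction to handle the quadratic term, the $a$-weights in \eqref{small} being calibrated to the singular constants. Up to this point the proposal matches the paper's proof.

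The genuine gap is the final identification, which you declare ``routine'' and attribute to the uniqueness contained in Theorem \ref{linear-thm}. It is not routine, and that theorem cannot deliver it. Your contraction in the weighted space produces \emph{some} solution $V\in X_{1+\delta}(\R^2)^2$ of the localized problem, but the function you actually care about, $\bar v$ (equivalently $v=\widetilde u-\widetilde U$), is a priori only in $X_1$: if you instead try to apply Theorem \ref{linear-thm} directly to $\bar v$ with $\Div\tilde F$ as a \emph{given} right-hand side, you cannot, because $\tilde F$ built from functions decaying like $\snorm{x}^{-1}$ lies only in $X_2$, and the theorem explicitly excludes the endpoint $\delta=0$. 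The same obstruction reappears when you compare the two solutions: $w:=v-V$ solves the linear system with right-hand side $\Div K(w)$, $K(w)=-(\widetilde u\otimes w+w\otimes\widetilde u)+v\otimes w+w\otimes V$, and since $\widetilde u,v\in X_1$ only, $K(w)\in X_2$ --- again the excluded endpoint --- so neither Theorem \ref{linear-thm} nor a Neumann series in $X_{1+\delta}$ applies to $w$, and the linear uniqueness statement (Stokes-paradox resolution) does not help because $v$ and $V$ solve the equation with \emph{different} right-hand sides (each involving itself). The paper closes this gap with a separate ingredient you do not supply: $a$-independent $L^q$ a priori estimates for \eqref{linear} (from \cite{Hi06}, \cite{GKy-2}, with $a$-independence obtained by scaling), giving $\norm{w}_{q_*,q}\leq C\norm{K(w)}_q$ in Lorentz spaces, followed by the Lorentz--H\"older inequality and the smallness of $\norm{\widetilde u}_{X_1}+\norm{\widetilde U}_{X_1}+\norm{V}_{X_{1+\delta}}$ to conclude $w=0$. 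Without this (or an equivalent uniqueness argument valid in the class $X_1$), your proof only shows that \emph{some} solution of the localized problem has the expansion \eqref{structure}, not that the \emph{given} $u$ does --- which is precisely the content of Theorem \ref{main}.
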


Note that the boundary integral in \eqref{torque} is understood as
$\langle y^\perp,(\cdots)\nu\rangle_{\partial\Omega}$ since
$(\cdots)\nu\in H^{-1/2}(\partial\Omega):=H^{1/2}(\partial\Omega)^*$
by the normal trace theorem
on account of the assumptions on the regularity of
$\{u,p\}$ and $f$ up to $\partial\Omega$.

The influence of $a$ in the smallness condition \eqref{small} is a delicate matter if the solution itself depends on $a$.
This is indeed the situation with the most natural boundary condition
$u|_{\partial\Omega}=ax^\perp$, \textit{i.e.}, the no-slip condition. In this case, the terms in \eqref{small} depending on $u$, that is,
\begin{align*}
\sup_{|x|\geq R}|x||u(x)|,\quad M,\quad {\sup_{R<|x|<2R} \bp{\snorm{u(x)} + \snorm{\nabla u(x)} + \snorm{\nabla^2 u(x)} + \snorm{p(x)}}},
\end{align*}
are controlled
by $\snorm{a}$ and a magnitude of $f$. Importantly, since $\delta+1/2<1$, the smallness condition
\eqref{small} is satisfied in this case when the data $a$ and $f$ are sufficiently small.

In the next section
we study the Stokes system in steady and time-periodic regimes, separately.
Combining those studies in both regimes, in Section \ref{linear-problem},
we develop the linear theory for \eqref{linear} in the whole plane
$\mathbb R^2$.
The final section is devoted to the proof of Theorem \ref{main}.

\section{Stokes system}
\label{stokes-system}

We make use of the Einstein summation convention and
implicitly sum over all repeated indices. Moreover, we abbreviate $\partial_j:=\partial_{x_j}$.
Given $\alpha\in(0,\infty)$, we define the Banach spaces
\begin{align*}
&\wspace{\alpha}(\R^2) := \setcl{f\in\LR{\infty}(\R^2)}{\norm{f}_{\wspace{\alpha}}<\infty}, \\
&\wspacelog{\alpha}(\R^2) := \setcl{f\in\LR{\infty}(\R^2)}{\norm{f}_{\wspacelog{\alpha}}<\infty},
\end{align*}
endowed with
\begin{equation*}
\begin{split}
&\norm{f}_{\wspace{\alpha}} 
:= \sup_{x\in\R^2} (1+\snorm{x})^\alpha\snorm{f(x)},  \\
&\norm{f}_{\wspacelog{\alpha}}
:= \sup_{x\in\R^2}
(1+\snorm{x})^\alpha (\log (e+\snorm{x}))\snorm{f(x)},
\end{split}
\end{equation*}
respectively.

\subsection{Steady-State Stokes system}

Consider the steady-state Stokes system
\begin{align}\label{SteadyStokes}
\begin{pdeq}
&-\Delta\velsstokes + \grad\pressstokes = \rhssstokes && \tin\R^2, \\
&\Div\velsstokes =0 && \tin\R^2
\end{pdeq}
\end{align}
and recall the fundamental solution $\fundsolstokesvel\in\TDR(\R^2)^{2\times 2}$ to \eqref{SteadyStokes} given by the function 
\begin{align}\label{SteadyStateStokesFundEq}
&\fundsolstokesvel_{ij}(x)\coloneqq 
\frac{1}{4\pi}\Bp{\delta_{ij}\log\bp{\snorm{x}^{-1}}+\frac{x_ix_j}{\snorm{x}^2} }. 
\end{align}
We need the following expansion of convolutions with $\fundsolstokesvel$:

\begin{lem}\label{SteadyStokesRepresentationFundsol}
Let $\delta\in(0,1)$, $\rhssstokes\in\wspacelog{3+\delta}(\R^2)^2$ and $\rhssstokestensor\in\wspace{2+\delta}(\R^2)^{2\times2}$. Then ($i=1,2$) 
\begin{align}\label{SteadyStokesRepresentationFundsol_Rep}
\begin{aligned}
&\fundsolstokesvel_{il}*\rhssstokes_l(x) = 
\Bb{\int_{\R^2}\rhssstokes(y)\,\dy}_l\fundsolstokesvel_{il}(x)
- \Bb{\int_{\R^2}\rhssstokes(y)\otimes y\,\dy}_{lj}\partial_j\fundsolstokesvel_{il}(x) 
+ \restfunc_i(x),\\
&\sup_{|x|\geq e}|x|^{1+\delta}|\restfunc_i(x)|
\leq \Cc[SteadyStokesRepresentationFundsol_RestfuncConst]{C} \norm{\rhssstokes}_{\wspacelog{3+\delta}}
\end{aligned}
\end{align}
and 
\begin{align}\label{SteadyStokesRepresentationFundsol_TensorRep}
\begin{aligned}
&\partial_j\fundsolstokesvel_{il}*\rhssstokestensor_{lj}(x) = 
\Bb{\int_{\R^2}\rhssstokestensor(y)\,\dy}_{lj}\partial_j\fundsolstokesvel_{il}(x)+\restfunctensor_i(x),\\
&\sup_{|x|\geq e}|x|^{1+\delta}|\restfunctensor_i(x)|
\leq \Cc[SteadyStokesRepresentationFundsol_RestfunctensorConst]{C} \norm{\rhssstokestensor}_{\wspace{2+\delta}}.
\end{aligned}
\end{align}
\end{lem}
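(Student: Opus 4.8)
The plan is to prove \eqref{SteadyStokesRepresentationFundsol_Rep} by Taylor-expanding the kernel $x\mapsto\fundsolstokesvel_{il}(x-y)$ around $y=0$ for $|x|\geq e$ and controlling the remainder by splitting the convolution integral into the three regions $\{|y|\leq |x|/2\}$, $\{|x|/2<|y|<2|x|\}$ and $\{|y|\geq 2|x|\}$. First I would write, for $|y|\leq|x|/2$,
\begin{equation*}
\fundsolstokesvel_{il}(x-y)=\fundsolstokesvel_{il}(x)-\partial_j\fundsolstokesvel_{il}(x)\,y_j+\restterm_{il}(x,y),\qquad
|\restterm_{il}(x,y)|\leq \Cc{C}\,\frac{|y|^2}{|x|^2},
\end{equation*}
using the homogeneity bounds $|\partial^\alpha\fundsolstokesvel(x)|\leq C|x|^{-|\alpha|}$ for $|\alpha|\geq 1$ together with convexity of $B_{|x|/2}(x)$ so that the second-order remainder is evaluated on the segment, where $|x-\theta y|\geq |x|/2$. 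Integrating against $\rhssstokes$ over the full space then produces exactly the two stated leading terms (the integrals $\int\rhssstokes$ and $\int\rhssstokes\otimes y$ converge absolutely because $\rhssstokes\in\wspacelog{3+\delta}$ gives $|\rhssstokes(y)|\lesssim(1+|y|)^{-(3+\delta)}(\log(e+|y|))^{-1}$, so $|y|\,|\rhssstokes(y)|$ is integrable), plus three error contributions: the tail of the leading terms over $\{|y|>|x|/2\}$, the genuine Taylor remainder over $\{|y|\leq|x|/2\}$, and the near-diagonal/far pieces where no expansion is used.

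The estimates for $\restfunc_i$ then reduce to elementary weighted integrals. For the inner remainder, $\int_{|y|\leq|x|/2}|\restterm_{il}(x,y)||\rhssstokes(y)|\,\dy\lesssim |x|^{-2}\int_{|y|\leq|x|/2}|y|^2(1+|y|)^{-(3+\delta)}\,\dy\lesssim |x|^{-2}\cdot|x|^{1-\delta}=|x|^{-(1+\delta)}$ (here the $\log$ weight is discarded, it only helps). For the tail of the two leading terms one uses $|\fundsolstokesvel_{il}(x)|\lesssim\log(e+|x|)$ and $|\partial_j\fundsolstokesvel_{il}(x)|\lesssim|x|^{-1}$ against $\int_{|y|>|x|/2}(1+|y|)(1+|y|)^{-(3+\delta)}(\log(e+|y|))^{-1}\,\dy$; this is where the logarithmic weight in $\wspacelog{3+\delta}$ is essential, since it kills the $\log(e+|x|)$ coming from $\fundsolstokesvel$ itself and yields a clean $|x|^{-(1+\delta)}$. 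For the region $\{|x|/2<|y|<2|x|\}$ one bounds $|\fundsolstokesvel_{il}(x-y)|$ crudely (its local logarithmic singularity at $y=x$ is integrable in the plane) times the pointwise bound on $\rhssstokes$, and for $\{|y|\geq 2|x|\}$ one has $|x-y|\sim|y|$ so $|\fundsolstokesvel_{il}(x-y)|\lesssim\log(e+|y|)$ against the fast-decaying $\rhssstokes$; both are routine and give the required rate. Identity \eqref{SteadyStokesRepresentationFundsol_TensorRep} is the same argument but one order simpler: expand $\partial_j\fundsolstokesvel_{il}(x-y)=\partial_j\fundsolstokesvel_{il}(x)+\restterm'_{ilj}(x,y)$ with $|\restterm'_{ilj}(x,y)|\lesssim|y|/|x|^2$ on $\{|y|\leq|x|/2\}$, so $\rhssstokestensor\in\wspace{2+\delta}$ (no log needed, since $\partial\fundsolstokesvel$ already decays) suffices; the tail $\int_{|y|>|x|/2}|y|^{-1}(1+|y|)^{-(2+\delta)}\,\dy$ and the two near/far regions are handled exactly as before.

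The main obstacle is purely bookkeeping: matching the precise decay rate $|x|^{-(1+\delta)}$ requires that in \emph{every} region the integral closes at exactly this power and not a worse one, and the only genuinely delicate point is the far tail of the leading terms in \eqref{SteadyStokesRepresentationFundsol_Rep}, where the slow ($\log|x|$) growth of $\fundsolstokesvel$ must be absorbed — this is exactly why the hypothesis is phrased with the extra $\log$ weight $\wspacelog{3+\delta}$ rather than $\wspace{3+\delta}$, and keeping track of this is the one step that genuinely uses the structure of the hypotheses rather than crude majorization. I would also remark that the constants $\Ccllast{C}$ and $\Cclast{C}$ depend only on $\delta$, since all bounds used are scale-covariant.
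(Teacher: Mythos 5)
Your plan is correct and follows essentially the same route as the paper's proof: the same splitting into $\{|y|<|x|/2\}$, the annulus $\{|x|/2\le|y|\le 2|x|\}$ and $\{|y|>2|x|\}$, a second-order Taylor expansion of $\fundsolstokesvel$ (resp.\ first-order for $\partial_j\fundsolstokesvel$) on the inner region, tail estimates for the two leading coefficients, and the same routine weighted integrals. One minor remark: the logarithmic weight in $\wspacelog{3+\delta}$ is in fact also needed in the annulus and far regions (not only in the tail of the leading terms), since there the kernel contributes a factor $\log|x|$ resp.\ $\log|y|$ that would otherwise spoil the clean rate $|x|^{-(1+\delta)}$; your use of the full pointwise bound from the norm handles this automatically.
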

   
\begin{proof}
Let $|x|\geq e$.
We fix $i\in \{1,2\}$ and decompose $(\Gamma^S*f)_i$ as
\begin{equation*}
\begin{split}
(\Gamma^S*f)_i(x)
&=\left(\int_{|y|<|x|/2}+\int_{|x|/2\leq |y|\leq 2|x|}+\int_{|y|>2|x|}\right)
\Gamma^S_{il}(x-y)f_l(y)\,\dy  \\
&=:I_1+I_2+I_3.
\end{split}
\end{equation*}
We show that the leading and second order terms with respect to an asymptotic expansion $\snorm{x}\ra\infty$
come from $I_1$.
To this end, we decompose $I_1$ as
\begin{equation}
I_1=\Gamma^S_{il}(x)\int_{|y|<|x|/2}f_l(y)\,\dy
+(\partial_j\Gamma_{il})(x)\int_{|y|<|x|/2}(-y_j)f_l(y)\,\dy 
+\widetilde r_i(x),
\label{I-1}
\end{equation}
where
\[
\widetilde r_i(x) 
:=\int_{|y|<|x|/2} y_ky_j f_l(y)
\int_0^1 (1-\tau)\, \bp{\partial_k\partial_j\Gamma^S_{il}} (x-\tau y)\,
d\tau\,dy.
\]
From \eqref{SteadyStateStokesFundEq} we directly obtain 
\[
|\left(\partial_k\partial_j\Gamma^S_{il}\right)(x-\tau y)|
\leq\frac{C}{|x-\tau y|^2}\leq\frac{C}{|x|^2} \qquad (|x|>2|y|),
\]
which yields
\begin{equation}
|\widetilde r_i(x)|
\leq\frac{C}{|x|^2}\int_{|y|<|x|/2}|y|^2|f(y)|\,\dy
\leq C|x|^{-(1+\delta)}\|f\|_{\wspace{3+\delta}}.
\label{tilde-r}
\end{equation}
Since
\begin{equation*} 
\begin{split}
&\left|\Gamma^S_{il}(x)\int_{|y|\geq |x|/2}f_l(y)\,\dy\right| \\
&\qquad\leq C(1+\log |x|)\|f\|_{\wspacelog{3+\delta}}
\int_{|y|\geq |x|/2}
(1+|y|)^{-(3+\delta)}(\log(e+|y|)^{-1}\,\dy  \\
&\qquad\leq C(1+|x|)^{-(1+\delta)}\|f\|_{\wspacelog{3+\delta}}
\end{split} 
\end{equation*}
and since
\[
\left|(\partial_j\Gamma^S_{il})(x)\int_{|y|\geq |x|/2}(-y_j)f_l(y)\,\dy
\right| \\
\leq C|x|^{-1}(1+|x|)^{-\delta}\|f\|_{\wspace{3+\delta}}
\]
it follows from \eqref{I-1} and \eqref{tilde-r} that
\begin{equation}
I_1=\alpha_l(f)\Gamma^S_{il}(x) 
+\beta_{lj}(f) \left(\partial_j\Gamma^S_{il}\right)(x) +\overline{r}_i(x)
\label{asym-I-1} 
\end{equation}
with
\[
\alpha_l(f):=\int_{\mathbb R^2}f_l(y)\,\dy, \qquad
\beta_{lj}(f):=\int_{\mathbb R^2}(-y_j)f_l(y)\,\dy, \qquad
|\overline{r}_i(x)|\leq C|x|^{-(1+\delta)}\|f\|_{\wspacelog{3+\delta}}.
\]
We turn to the estimate of $I_2$.
Using
\[
\int_{|y|\leq 2|x|}|\Gamma^S(x-y)|\,\dy
\leq C\int_{|y-x|\leq 3|x|}\left(1+\left|\log\frac{1}{|y-x|}\right|\right)\dy
\leq C|x|^2(1+\log |x|),
\]
we obtain
\begin{equation}
\begin{split}
|I_2|
&\leq C(1+|x|)^{-(3+\delta)}  \Bp{\log\frac{|x|}{2}}^{-1}\,
\|f\|_{\wspacelog{3+\delta}}
\int_{|y|\leq 2|x|}|\Gamma^S(x-y)|\,\dy  \\
&\leq C(1+|x|)^{-(1+\delta)}\|f\|_{\wspacelog{3+\delta}}.
\end{split} 
\label{I-2} 
\end{equation}
Finally, we have
\begin{equation}  
|I_3| 
\leq C\int_{|y|> 2|x|}(1+\log |y|)|f(y)|\,\dy
\leq C(1+|x|)^{-(1+\delta)}\|f\|_{\wspacelog{3+\delta}}.
\label{I-3}
\end{equation}
We collect \eqref{asym-I-1}, \eqref{I-2} and \eqref{I-3} to conclude
\eqref{SteadyStokesRepresentationFundsol_Rep}.
The other representation
\eqref{SteadyStokesRepresentationFundsol_TensorRep} is proved
in a similar way.
\end{proof}

\subsection{Time-periodic Stokes system}

By $\torus_\per\coloneqq\R/\per\Z$ we denote torus groups
for ${\cal T}>0$.  
We consider $\grpper$ as a locally compact abelian group with 
a topology and differentiable structure inherited from $\R\times\R^2$ via the quotient mapping
$\quotientmap:\R\times\R^2\ra\grpper$, $\quotientmap(t,x)\coloneqq\bp{[t],x}$. We let $\dt$ denote the normalized Haar measure on $\torusper$, which means that
\begin{align*}
\int_{\torusper} f(t)\,\dt = \frac{1}{\per}\int_0^\per f(t)\,\dt
\end{align*}
when $\torusper$ is identified with the interval $[0,\per)$ in the canonical way.

Taking $\torus_\per$ as a time axis, we can conveniently formulate
the $\per$-time-periodic Stokes problem in the two-dimensional whole-space as:
\begin{align}\label{TimePeriodicStokesTSetting}
\begin{pdeq}
&\partial_t\veltp-\Delta\veltp + \grad\prestp = \rhstpgeneric && \tin\grp, \\
&\Div\veltp =0 && \tin\grp.
\end{pdeq}
\end{align}

We shall investigate \eqref{TimePeriodicStokesTSetting} using the Fourier transform $\FT_\grpper$ in the framework of the Schwartz-Bruhat space $\SR(\grpper)$ and corresponding space of tempered distributions
$\TDR(\grpper)$; see for example 
\cite{Ky, EKy}
for more details.
We identify the dual group of ${\grpper}$ with $\Z\times\R^2$ and denote points in the dual group by $(k,\xi)\in\Z\times\R^2$. 
The Fourier transform $\FT_{\grpper}:\SR(\grpper)\ra\SR(\dualgrp)$ and its inverse are then given by
\begin{align*}
&\FT_\grpper\nb{u}(k,\xi)\coloneqq\int_\torusper\int_{\R^2} u(t,x)\,\e^{-ix\cdot\xi-ik\perf t}\,\dx\dt,\\
&\iFT_\grpper\nb{w}(t,x)\coloneqq\sum_{k\in\Z}\,\int_{\R^2} w(k,\xi)\,\e^{ix\cdot\xi+ik\perf t}\,\dxi,
\end{align*}
respectively, provided the Lebesgue measure $\dxi$ is normalized appropriately. By duality, $\FT_\grpper$ extends to a homeomorphism $\FT_\grpper:\TDR(\grpper)\ra\TDR(\dualgrp)$. Observe that $\FT_\grpper=\FT_{\torus_\per}\circ\FT_{\R^2}$.

The concept of a  fundamental solution to the time-periodic Stokes equations was introduced in \cite{Ky}
as a distribution $(\fundsoltpvelper,\fundsoltppresper)\in\TDR(\grpper)^{2\times 2}\times\TDR(\grpper)^{2}$ satisfying
\begin{align}\label{tpFundsolEq}
\begin{pdeq}
&\partial_t\fundsoltpvelper_{ij}-\Delta\fundsoltpvelper_{ij} + \partial_i \fundsoltppresper_j = \delta_{ij}\,\delta_{\grpper},  \\
&\partial_j\fundsoltpvelper_{ij} =0.
\end{pdeq}
\end{align} 
Here, $\delta_{ij}$ and $\delta_{\grpper}$ denote the Kronecker delta and delta distribution, respectively.
We can identify a time-periodic fundamental solution as the sum of a fundamental solution to the steady-state Stokes problem and a remainder part we 
shall refer to as \emph{purely periodic} part. 
Employing the Fourier transform $\FT_\grpper$ in \eqref{tpFundsolEq}, we find as in \cite{Ky, EKy}
a time-periodic fundamental solution given by 
\begin{align}\label{TimePeriodicFundsolDef}
\fundsoltpvelper \coloneqq \fundsolstokesvel\otimes \onedist_{\torus_\per} + \fundsolcomplper
\end{align}
with
\begin{align}\label{PurelyPeriodicTimePeriodicFundsolDef}
\fundsolcomplper \coloneqq \iFT_\grpper\Bb{ \frac{1-\delta_\Z(k)}{\snorm{\xi}^2 + i\perf k }\,\Bp{\idmatrix - \frac{\xi\otimes\xi}{\snorm{\xi}^2}}}\in\TDR(\grpper)^{2\times 2}.
\end{align}
Here, $\onedist_{\torus_\per}\in\TDR(\torus_\per)$ denotes the constant $1$, $\idmatrix\in\R^{2\times 2}$ the identity matrix, and $\delta_\Z$ the delta distribution on $\Z$ (which is simply
the function with $\delta_\Z(k)=1$ if $k=0$ and $\delta_\Z(k)=0$ if $k\neq0$). 
Given $\rhstpgeneric\in\SR(\grp)^2$,
a solution to the time-periodic Stokes problem \eqref{TimePeriodicStokesTSetting}
is then given by 
$\veltp \coloneqq \fundsoltpvelper \convolutionpergroup \rhstpgeneric$, 
with component-wise convolution $\convolutionpergroup$ over the group $\grp$. From \eqref{TimePeriodicFundsolDef} we see that
\begin{align}\label{intro_ConvolutionWithTPFundsol2}
\veltp \coloneqq \fundsoltpvelper \convolutionpergroup  \rhstpgeneric = \fundsolstokesvel*_{\R^2}\Bp{\int_{\torusper} \rhstpgeneric(s,\cdot)\,\ds} + \fundsolcomplper\convolutionpergroup \rhstpgeneric.
\end{align}

Let $\delta\in (0,1)$, then the issue of 
Lemma \ref{ConvolutionWithPurelyPeriodicFundSolLemma} below
is to quantify the dependence of decay estimates of the purely periodic
part like $O(|x|^{-(1+\delta)})$ on the period ${\cal T}$.
To this end, it is important to establish the following lemma,
which gives us pointwise estimates of
the purely periodic part $\fundsolcomplone$ of the fundamental solution
near $x=0$ simultaneously with those for large $|x|$.
\begin{lem}\label{FundsolComplPointwiseDecayLem}
For every $\gamma\in (0,1)$ and $p\in (1,\frac{1}{1-\gamma})$
there are constants
$C_3,\, C_4>0$ depending only on $\gamma,\, p$ such that
%
\begin{align}
&\norm{\fundsolcomplone(\cdot,x)}_{\LR{p}\np{\torusone}} \leq \Cc[FundsolComplPointwiseDecayLem_EstConst]{C}\,
\snorm{x}^{-2\gamma},
\label{FundsolComplPointwiseDecayLem_PointwisedecayEst} \\
&\norm{\grad\fundsolcomplone(\cdot,x)}_{\LR{p}\np{\torusone}} \leq \Cc[FundsolComplPointwiseDecayLem_EstConst]{C}\, 
\snorm{x}^{-(1+2\gamma)},
\label{FundsolComplPointwiseDecayLem_PointwisedecayEstGrad}
\end{align}
uniformly in $\mathbb R^2\setminus\{0\}$.
\end{lem}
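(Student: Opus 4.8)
The starting point is the explicit representation \eqref{PurelyPeriodicTimePeriodicFundsolDef} of $\fundsolcomplone$ for $\per=1$, which gives, for fixed $k\neq 0$, the Fourier-in-space symbol
\[
\widehat{\fundsolcomplone}(k,\xi) = \frac{1}{\snorm{\xi}^2 + i\perone k}\,\Bp{\idmatrix - \frac{\xi\otimes\xi}{\snorm{\xi}^2}}.
\]
The plan is to estimate, for each $k\neq 0$, the inverse Fourier transform in $\xi$ of this symbol — call it $\fundsolcomplone_k(x)$, so that $\fundsolcomplone(t,x)=\sum_{k\neq 0}\fundsolcomplone_k(x)\e^{i2\pi k t}$ — with bounds that are summable in $\ell^p_k$ after an appropriate use of the Hausdorff--Young inequality (or, since $\ell^p\subset\ell^\infty$ and we only need the $\LR p(\torusone)$-norm in $t$, one may instead apply Hausdorff--Young on the torus, which requires the $\ell^{p'}_k$-norm of $\fundsolcomplone_k(x)$ with $p'$ the conjugate exponent). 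First I would record the scaling: the symbol is homogeneous under $\xi\mapsto\sqrt{\snorm k}\,\xi$ up to the $k$-dependence, so $\fundsolcomplone_k(x)$ can be written as $\snorm k^{\,\alpha}$ times a fixed profile evaluated at $\sqrt{\snorm k}\,x$; tracking the exponent $\alpha$ (it should be $0$ in two dimensions, since $\xi\in\R^2$ and the symbol is homogeneous of degree $-2$, exactly balancing the $\dxi$-measure) is the bookkeeping that makes the $k$-sum converge.

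Concretely, I would split the $\xi$-integral defining $\fundsolcomplone_k(x)$ into the region $\snorm\xi\lesssim\snorm k^{1/2}$ and its complement (or, equivalently, $\snorm\xi\lesssim\snorm x^{-1}$ versus $\snorm\xi\gtrsim\snorm x^{-1}$, whichever is cleaner for the regime at hand), exploiting that on the low-frequency part the denominator is comparable to $\snorm k$ and the numerator is bounded, while on the high-frequency part the denominator is comparable to $\snorm\xi^2$ and one integrates against oscillation $\e^{ix\cdot\xi}$. For the decay at large $\snorm x$ one integrates by parts in $\xi$ using the $\e^{ix\cdot\xi}$ factor to gain powers of $\snorm x^{-1}$, being careful near $\xi=0$ where $\idmatrix-\xi\otimes\xi/\snorm\xi^2$ is merely bounded (not smooth); for the behavior near $x=0$ one simply bounds the integrand crudely and uses that the symbol is integrable near $\xi=0$ in $\R^2$ because the $k\neq 0$ denominator stays away from zero. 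The key point is to carry out both estimates so that the resulting bound on $\norm{\fundsolcomplone_k(\cdot)}$ in the relevant $\xi$-norm scales like $\snorm k^{-s}\snorm x^{-2\gamma}$ (respectively $\snorm k^{-s}\snorm x^{-(1+2\gamma)}$ for the gradient) with a single exponent $s>1/p'$ valid uniformly in $\snorm x$ — this is the content of "simultaneously near the origin and at infinity" in the statement — and then sum in $k$.

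The gradient estimate \eqref{FundsolComplPointwiseDecayLem_PointwisedecayEstGrad} is obtained the same way after multiplying the symbol by $i\xi$: this improves the decay at infinity by one power (the extra $\snorm\xi$ in the numerator is what one integrates by parts against) and, near the origin, the extra $\snorm\xi$ only helps integrability, so the same splitting works with $2\gamma$ replaced by $1+2\gamma$. The interpolation between the two behaviors — getting the \emph{single} power $\snorm x^{-2\gamma}$ rather than $\min(\snorm x^{-2},\,1)$-type bounds with different constants — is effected by choosing, in the dyadic decomposition of the $\xi$-integral, the cutoff at scale $\snorm x^{-1}$ and estimating the two pieces so that they both produce $\snorm x^{-2\gamma}$ for $0<\gamma<1$ (the constraint $p<\frac{1}{1-\gamma}$, i.e.\ $2\gamma>2/p'$ adjusted, is exactly the threshold that makes the $\LR p(\torusone)$-norm finite and the $k$-sum convergent).

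\textbf{Main obstacle.} The hard part will be the estimate \emph{near} $x=0$ uniformly in $k$: the naive bound on $\fundsolcomplone_k(x)$ obtained by putting absolute values inside the $\xi$-integral is $O(\log\snorm k)$ or worse (because $\int_{\snorm\xi<1}\snorm\xi^{-2}$ diverges logarithmically and one must use the $\snorm k$ in the denominator to cut it off), and this must be reconciled with the required $\snorm x^{-2\gamma}$ growth and with $\ell^p_k$-summability. The trick — and this is presumably why the lemma is stated with the two-parameter family $(\gamma,p)$ and the constraint $p<\frac{1}{1-\gamma}$ — is to interpolate: one does not estimate $\fundsolcomplone_k$ in $\LR\infty_x$ but rather extracts a small negative power $\snorm x^{-2\gamma}$ by sacrificing a matching positive power of $\snorm k$ from the low-frequency region ($\snorm\xi\lesssim\snorm x^{-1}$ contributes $\snorm k^{-1}\snorm x^{-2}$ after integrating the bounded symbol over a ball of radius $\snorm x^{-1}$, which one then writes as $\snorm k^{-1+\gamma}\cdot\snorm k^{-\gamma}\snorm x^{-2}\le \snorm k^{-1+\gamma}\snorm x^{-2(1-\gamma)}\cdot(\snorm k^{1/2}\snorm x)^{?}$...), so that the $k$-exponent lands in the summable range precisely when $p<\frac{1}{1-\gamma}$. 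Getting this trade-off between the $k$-power and the $x$-power exactly right, uniformly over the whole range $\snorm x\in(0,\infty)$ and with the two regimes $\snorm x\lessgtr\snorm k^{-1/2}$ matching up, is the delicate computational heart of the proof; everything else is Hausdorff--Young and dyadic summation.
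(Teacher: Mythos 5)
There is a genuine gap in your plan, and it sits exactly at the point you flag as the "delicate computational heart": the passage from bounds on the individual spatial profiles $\fundsolssrk(x)$ (the $k$-th Fourier coefficient in $t$ of $\fundsolcomplone(\cdot,x)$) to the $\LR{p}\np{\torusone}$-norm in $t$. Your splitting of the $\xi$-integral (equivalently, the Bessel-function asymptotics used in the paper) does yield the correct size bounds, essentially $\snorm{\fundsolssrk(x)}\leq C\bp{\snorm{k}^{1/2}\snorm{x}}^{-2\gamma}=C\snorm{k}^{-\gamma}\snorm{x}^{-2\gamma}$ uniformly in both regimes. But size bounds on Fourier coefficients alone cannot give the lemma in the stated range: for $\gamma\leq\half$ the sequence $\snorm{k}^{-\gamma}$ is not in $\ell^2$, and there exist sequences $c_k$ with $\snorm{c_k}\leq\snorm{k}^{-\gamma}$ whose Fourier series is not even in $\LR{1}(\torusone)$ (random signs give an almost-sure counterexample), so no amount of "Hausdorff--Young and dyadic summation" closes the argument. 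Moreover the Hausdorff--Young step you invoke goes the wrong way for the relevant exponents: since $p<\frac{1}{1-\gamma}$ forces $p<2$ when $\gamma\leq\half$, controlling the $\ell^{p'}$-norm of the coefficients with $p'>2$ does not control $\norm{\cdot}_{\LR{p}\np{\torusone}}$, and falling back on $\ell^2$ or $\ell^1$ summation only covers $\gamma>\half$.

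What is missing is the use of \emph{smoothness in $k$} of the coefficients, which is how the paper obtains the sharp threshold $p<\frac{1}{1-\gamma}$. The paper factors $\fundsolssrk(x)=\bp{\snorm{k}^{\gamma}\fundsolssrk(x)}\cdot\snorm{k}^{-\gamma}$, identifies $\iFT_{\torusone}\bb{(1-\delta_\Z(k))\snorm{k}^{-\gamma}}(t)=c\snorm{t}^{\gamma-1}+\ggamma(t)$ with $\ggamma$ smooth (this is precisely where $p<\frac{1}{1-\gamma}$ enters, as the condition for $\snorm{t}^{\gamma-1}\in\LR{p}$), and then shows that $k\mapsto\snorm{k}^{\gamma}\partial_{x_j}\fundsolssrk(x)$ is an $\LR{p}(\torusone)$ Fourier multiplier with operator norm $\leq C\snorm{x}^{-(1+2\gamma)}$, via the Marcinkiewicz multiplier theorem on $\R$ applied to the continuous extension $\eta\mapsto\cutoff(\eta)\snorm{\eta}^{\gamma}\partial_{x_j}\fundsolssreta(x)$ together with de Leeuw's transference principle. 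Running Marcinkiewicz requires, in addition to your size estimates, the derivative bound $\snorm{\eta\,\partial_\eta M_x(\eta)}\leq C\snorm{x}^{-(1+2\gamma)}$ in both regimes $\sqrt{2\pi\snorm{\eta}}\snorm{x}\lessgtr 1$; the paper gets both from the explicit Borchers--Varnhorn representation of $\fundsolssreta$ in terms of $\besselK_0,\besselK_1$. Your oscillatory-integral analysis could in principle supply these $\eta$-derivative bounds too, but your proposal never identifies the need for them, because it never sets up the multiplier/transference structure; without that structure the claimed estimate is unreachable for $\gamma\leq\half$, i.e., for most of the stated parameter range.
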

\begin{proof}
We focus on \eqref{FundsolComplPointwiseDecayLem_PointwisedecayEstGrad}. We derive directly from \eqref{PurelyPeriodicTimePeriodicFundsolDef} the 
identity
\begin{align*}
\pdax\fundsolcomplone(\cdot,x) &= \iFT_\torusone\bb{ \torusmultiplier(k)\, \FT_\torusone\nb{\hgamma} }, 
\end{align*}
where
\begin{align*}
&\torusmultiplier:\Z\ra\C,\quad \torusmultiplier(k):= \bp{1-\delta_\Z(k)} \snorm{k}^\gamma \pdax \fundsolssrk(x),\\
&\fundsolssrk:=\iFT_{\R^2}\Bb{ \frac{1}{\snorm{\xi}^2 + i\perone k }\,\Bp{\idmatrix - \frac{\xi\otimes\xi}{\snorm{\xi}^2}}}\in\TDR(\R^2)^{2\times 2}\qquad(k\neq 0),
\end{align*}
and for $\gamma\in\np{0,1}$
\begin{align*}
\hgamma\in\TDR(\torusone),\quad \hgamma:=\iFT_{\torusone} \Bb{ \bp{1-\delta_\Z(k)} \snorm{k}^{-\gamma}}. 
\end{align*}
We shall establish \eqref{FundsolComplPointwiseDecayLem_PointwisedecayEstGrad} by showing that $\torusmultiplier$ is an $\LR{p}(\torusone)$ multiplier.
For this purpose, we utilize de Leeuw's Transference Principle in combination with Marcinkiewicz's Multiplier Theorem. Let
$\cutoff\in\CRi(\R)$ be a cut-off function with $\cutoff(\eta)=0$ for $\snorm{\eta}<\half$, and $\cutoff(\eta)=1$ for $\snorm{\eta}\geq 1$.
Put
\begin{align*}
\reallinemultiplier:\R\ra\C,\quad \reallinemultiplier(\eta):= \cutoff(\eta)\, \snorm{\eta}^\gamma\, \pdax \fundsolssreta(x).
\end{align*}
We compute as in \cite{BV}
to obtain 
\begin{align}\label{ResolventStokesFundSolId}
\begin{aligned}
&\fundsolssreta(x) = \frac{1}{2\pi}\Bb{\besseleone\bp{\sqrt{i\perone \eta}\,\snorm{x}}\idmatrix + \besseletwo\bp{\sqrt{i\perone \eta}\,\snorm{x}}\frac{x\otimes x}{\snorm{x}^2}},\\
&\besseleone(z) := \besselK_0(z) + z^{-1}\besselK_1(z) - z^{-2},\\
&\besseletwo(z) := -\besselK_0(z) - 2(z^{-1}\besselK_1(z) - z^{-2}),\\
\end{aligned}
\end{align}
where $\besselK_n$ denotes the modified Bessel function of order $n$, and $\sqrt{z}$ denotes the square root of $z$ with \emph{nonnegative} real part.
An expansion of $\besselK_1$ (see for example 
\cite[9.6.10--11]{AS})
yields
$z^{-1}\besselK_1(z) - z^{-2} = \ln\bp{\half z}\entireP(z) + \entireQ(z)$
for two entire functions $\entireP$ and $\entireQ$. Recalling that $\besselK_0=\bigo\bp{\ln(z)}$ as $z\ra 0$ and $\besselK_0'(z)=-\besselK_1(z)=\bigo(z^{-1})$ (see for example 
\cite[9.6.8--9,9.6.27]{AS})
we deduce ($j=1,2$)
\begin{align*}
\besselej(z)=\bigo\bp{\ln(z)},\quad
\besselej'(z)=\bigo\bp{\snorm{z}^{-1}},\quad 
\besselej''(z)=\bigo\bp{\snorm{z}^{-2}}\quad \tas\ z\ra 0. 
\end{align*}
This asymptotic behavior implies the following estimate for $\sqrt{\perone \snorm{\eta}}\,\snorm{x}\leq 1$:
\begin{align*}
\snorm{\reallinemultiplier(\eta)} 
&\leq \Cc{c} \snorm{\eta}^\gamma \bp{\snorm{x}^{-1} + \snorml{\ln(\sqrt{\perone \snorm{\eta}}\,\snorm{x})}\, \snorm{x}^{-1} }\\
&\leq \Cc{c} \snorm{\eta}^\gamma 
\{{\sqrt{\perone \snorm{\eta}}\,\snorm{x}\}^{-2\gamma}\,\snorm{x}^{-1} }
\leq \Cc{c} \snorm{x}^{-(1+2\gamma)}
\end{align*}
where $\Cclast{c}=\Cclast{c}(\gamma)$ is independent on $\eta$ and $x$.
Due to the exponential decay of modified Bessel functions as $z\ra\infty$ with $\realpart(z)>0$ (see for example 
\cite[9.2.3,9.6.4]{AS})
we further observe that
\begin{align*}
\besselej(z)=\bigo\bp{\snorm{z}^{-2}},\quad
\besselej'(z)=\bigo\bp{\snorm{z}^{-3}},\quad 
\besselej''(z)=\bigo\bp{\snorm{z}^{-4}}\quad \tas\ z\ra \infty \text{ with }\realpart(z)>0. 
\end{align*}  
We can thus estimate for $\sqrt{\perone \snorm{\eta}}\,\snorm{x}>1$:
\begin{align*}
\snorm{\reallinemultiplier(\eta)} 
&\leq \Cc{c} \snorm{\eta}^{\gamma-1} \snorm{x}^{-3} \\
&\leq \Cclast{c} \snorm{\eta}^{\gamma-1} \snorm{x}^{-3} \bp{\sqrt{\perone \snorm{\eta}}\,\snorm{x}}^{2(1-\gamma)}
\leq \Cc{c} \snorm{x}^{-(1+2\gamma)}
\end{align*}
where $\Cclast{c}=\Cclast{c}(\gamma)$ is independent on $\eta$ and $x$. The function 
$\eta\mapsto{\eta}\,\partial_\eta\reallinemultiplier(\eta)$ is estimated in a completely similar manner, and we conclude
\begin{align*}
\forall\eta\in\R:\quad  \snorm{\reallinemultiplier(\eta)} + \snorm{\eta\,\partial_\eta\reallinemultiplier(\eta)} 
&\leq \Cc[multipliernorm]{c} \snorm{x}^{-(1+2\gamma)}.
\end{align*}
By the Marcinkiewicz Multiplier Theorem (see for example 
\cite[Corollary 5.2.5]{Gr})
$\reallinemultiplier$ is an $\LR{p}(\R)$ multiplier with operator norm bounded by $\const{multipliernorm} \snorm{x}^{-(1+2\gamma)}$.
Since $\torusmultiplier(k)=\reallinemultiplier(k)$ for all $k\in\Z$, the Transference Principle (see for example 
\cite[Theorem 3.6.7]{Gr})
implies that $\torusmultiplier(k)$ is an $\LR{p}(\torusone)$ multiplier with its operator norm satisfying the same bound. We thus conclude
\begin{align*}
\norm{\pdax\fundsolcomplone(\cdot,x)}_{\LR{p}\np{\torusone}} &= \norm{\iFT_\torusone\bb{ \torusmultiplier(k)\, \FT_\torusone\nb{\hgamma} }}_{\LR{p}\np{\torusone}} \leq \const{multipliernorm} \snorm{x}^{-(1+2\gamma)}\norm{\hgamma}_p.
\end{align*}
It remains to show that $\norm{\hgamma}_p$ is finite
for $p\in (1,\frac{1}{1-\gamma})$.
To this end, we identify $\torusone$
 with the interval $(-\half,\half]$ and compute
\begin{align*}
\hgamma(t):=\iFT_{\torusone} \Bb{ \bp{1-\delta_\Z(k)} \snorm{k}^{-\gamma}} (t) = \Cc{c}\snorm{t}^{\gamma-1} + \ggamma(t),\quad t\in(-\half,\half],
\end{align*}
for some function $\ggamma\in \CR{\infty}(\torusone)$; see for example 
\cite[Example 3.1.19]{Gr}.
We thus conclude
\eqref{FundsolComplPointwiseDecayLem_PointwisedecayEstGrad}.
A completely similar argument yields \eqref{FundsolComplPointwiseDecayLem_PointwisedecayEst}.
\end{proof}
\begin{lem}\label{ConvolutionWithPurelyPeriodicFundSolLemma}
Let $\delta\in(0,1)$,
$\rhstpgeneric\in\LR{\infty}\bp{\torusper;\wspace{2+\delta}(\R^2)}^2$ and
$\rhstpgenerictensor\in\LR{\infty}\bp{\torusper;\wspace{2+\delta}(\R^2)}^{2\times2}$. Then ($i=1,2$)
\begin{align}
&\norm{\fundsolcomplper_{il}\convolutionpergroup \rhstpgeneric_{l}}_{\LR{\infty}\np{\torusper;\wspace{1+\delta}}} \leq 
\Cc[ConvolutionWithPurelyPeriodicFundSolLemma_Const]{C}\,\per^{\frac{1+\delta}{2}}\norm{\rhstpgeneric}_{\LR{\infty}\np{\torusper;\wspace{2+\delta}}},\label{ConvolutionWithPurelyPeriodicFundSolLemma_Estimate}\\
&\norm{\partial_j\fundsolcomplper_{il}\convolutionpergroup \rhstpgenerictensor_{lj}}_{\LR{\infty}\np{\torusper;\wspace{1+\delta}}} \leq 
\Cc[ConvolutionWithPurelyPeriodicFundSolLemma_GradConst]{C}\,\per^{\frac{\delta}{2}}\norm{\rhstpgenerictensor}_{\LR{\infty}\np{\torusper;\wspace{2+\delta}}},\label{ConvolutionWithPurelyPeriodicFundSolLemma_GradEstimate}
\end{align} 
where the convolution $\convolutionpergroup$ is taken with respect to time and space $(t,x)\in\torusper\times\R^2$, and the constants $\const{ConvolutionWithPurelyPeriodicFundSolLemma_Const}=\const{ConvolutionWithPurelyPeriodicFundSolLemma_Const}(\delta)$ and 
$\const{ConvolutionWithPurelyPeriodicFundSolLemma_GradConst}=\const{ConvolutionWithPurelyPeriodicFundSolLemma_GradConst}(\delta)$ 
are independent of $\per$.
\end{lem}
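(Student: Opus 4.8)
The plan is to deduce both inequalities from the period-one pointwise bounds of Lemma~\ref{FundsolComplPointwiseDecayLem} by means of a parabolic rescaling in the variables $(t,x)$, followed by a routine weighted-$\LR{1}$ convolution estimate of the same flavour as the one already carried out in the proof of Lemma~\ref{SteadyStokesRepresentationFundsol}. The whole point is that Lemma~\ref{FundsolComplPointwiseDecayLem} describes $\fundsolcomplone$ and its gradient simultaneously near $x=0$ and near $x=\infty$, and the scaling converts the free parameter $\gamma$ there into the precise power of $\per$ claimed here; the trade-off ``faster decay $=$ worse power of $\per$'' is encoded in the choice of $\gamma$.

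First I would establish the scaling identity. Substituting $\xi=\eta/\sqrt\per$ in the inverse Fourier transform in \eqref{PurelyPeriodicTimePeriodicFundsolDef}, and using $\snorm\xi^2+i\perf k=\per^{-1}\bp{\snorm\eta^2+i\perone k}$ together with the induced identification $\torusper\ni t\mapsto t/\per\in\torusone$, a direct computation gives
\[
\fundsolcomplper(t,x)=\fundsolcomplone\bp{t/\per,\,x/\sqrt\per},\qquad
\grad_x\fundsolcomplper(t,x)=\per^{-1/2}\,(\grad\fundsolcomplone)\bp{t/\per,\,x/\sqrt\per}
\]
in $\TDR(\grpper)$. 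Since $\dt$ is a probability measure on $\torusper$, the map $t\mapsto t/\per$ is an isometry of $\LR{p}(\torusper)$ onto $\LR{p}(\torusone)$, so applying Lemma~\ref{FundsolComplPointwiseDecayLem} to the right-hand sides yields, for every admissible pair $\gamma\in(0,1)$, $p\in(1,\tfrac{1}{1-\gamma})$ and every $z\neq0$,
\[
\norm{\fundsolcomplper(\cdot,z)}_{\LR{p}(\torusper)}\le C\,\per^{\gamma}\snorm z^{-2\gamma},\qquad
\norm{\grad\fundsolcomplper(\cdot,z)}_{\LR{p}(\torusper)}\le C\,\per^{\gamma}\snorm z^{-(1+2\gamma)}.
\]
When $\gamma<1$ (resp.\ $\gamma<\tfrac12$) these bounds together with $\LR{p}(\torusper)\embeds\LR{1}(\torusper)$ also show that $\fundsolcomplper$ (resp.\ $\grad\fundsolcomplper$) belongs to $\LRloc{1}(\grpper)$, so that the convolutions appearing in the lemma are absolutely convergent Lebesgue integrals and agree with the distributional definition.

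Next, for fixed $t$ I would write the group convolution as an integral over $\torusper\times\R^2$, estimate $\snorm{\rhstpgeneric_l(s,y)}\le(1+\snorm y)^{-(2+\delta)}\norm{\rhstpgeneric}_{\LR{\infty}\np{\torusper;\wspace{2+\delta}}}$, and apply Hölder's inequality in $s\in\torusper$ with exponents $p$ and $p'$. By translation invariance of the $\LR{p}(\torusper)$-norm and the fact that $\torusper$ carries a probability measure (so $\norm{\,\cdot\,}_{\LR{p'}(\torusper)}\le\norm{\,\cdot\,}_{\LR{\infty}(\torusper)}$), the bounds of the previous step control the kernel factor and leave
\[
\snorm{\fundsolcomplper_{il}\convolutionpergroup\rhstpgeneric_l(t,x)}\le C\,\per^{\gamma}\norm{\rhstpgeneric}_{\LR{\infty}\np{\torusper;\wspace{2+\delta}}}\int_{\R^2}\snorm{x-y}^{-2\gamma}(1+\snorm y)^{-(2+\delta)}\,\dy,
\]
uniformly in $t$, and likewise with $\snorm{x-y}^{-(1+2\gamma)}$ in place of $\snorm{x-y}^{-2\gamma}$ for $\partial_j\fundsolcomplper_{il}\convolutionpergroup\rhstpgenerictensor_{lj}$.

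Finally, the elementary bound $\int_{\R^2}\snorm{x-y}^{-a}(1+\snorm y)^{-b}\,\dy\le C(1+\snorm x)^{-a}$, valid for $0<a<2<b$ and proved by the three-region splitting $\snorm y<\snorm x/2$, $\snorm x/2\le\snorm y\le2\snorm x$, $\snorm y>2\snorm x$ exactly as in the proof of Lemma~\ref{SteadyStokesRepresentationFundsol}, closes the argument: for \eqref{ConvolutionWithPurelyPeriodicFundSolLemma_Estimate} take $2\gamma=1+\delta$, i.e.\ $\gamma=\tfrac{1+\delta}{2}$, so $\per^{\gamma}=\per^{(1+\delta)/2}$; for \eqref{ConvolutionWithPurelyPeriodicFundSolLemma_GradEstimate} take $1+2\gamma=1+\delta$, i.e.\ $\gamma=\tfrac{\delta}{2}$, so $\per^{\gamma}=\per^{\delta/2}$. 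In both cases $\delta\in(0,1)$ ensures $0<a=1+\delta<2$, $b=2+\delta>2$, that $\gamma$ lies in the admissible range of Lemma~\ref{FundsolComplPointwiseDecayLem}, and that the interval $(1,\tfrac{1}{1-\gamma})$ for $p$ is nonempty (for instance $p=2$ works for \eqref{ConvolutionWithPurelyPeriodicFundSolLemma_Estimate}); the resulting constants depend only on $\delta$ through $\gamma$ and the chosen $p$, hence are independent of $\per$. The only mildly delicate point is the bookkeeping in the scaling step — justifying the rescaling of \eqref{PurelyPeriodicTimePeriodicFundsolDef} at the distributional level and tracking the exact power of $\per$ produced when transferring the $\LR{p}(\torusone)$-estimates to $\LR{p}(\torusper)$-estimates; everything after that is routine.
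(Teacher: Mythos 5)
Your proposal is correct and follows essentially the same route as the paper: the parabolic scaling identity $\fundsolcomplper(t,x)=\fundsolcomplone(\per^{-1}t,\per^{-1/2}x)$, the $\LR{p}(\torusone)$ pointwise bounds of Lemma~\ref{FundsolComplPointwiseDecayLem}, H\"older in time, and the weighted spatial convolution estimate, with the same choices $\gamma=\tfrac{1+\delta}{2}$ and $\gamma=\tfrac{\delta}{2}$ producing the powers $\per^{(1+\delta)/2}$ and $\per^{\delta/2}$. The only (immaterial) difference is that you transfer the kernel estimates to $\torusper$ before applying H\"older, whereas the paper rescales inside the convolution integral first; the bookkeeping of the factor $\per^\gamma$ comes out identically.
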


\begin{proof}
We focus on \eqref{ConvolutionWithPurelyPeriodicFundSolLemma_GradEstimate}.
The decay estimate in Lemma \ref{FundsolComplPointwiseDecayLem} ensures sufficient integrability of $\partial_j\fundsolcomplper$
for the convolution $\partial_j\fundsolcomplper_{il}\convolutionpergroup \rhstpgenerictensor_{jl}$ to be expressed in terms of a classical convolution integral 
\begin{align*}
\partial_j\fundsolcomplper_{il}*\rhstpgenerictensor_{lj}\,(t,x) 
= \int_{\torusper}\int_{\R^2} 
\partial_j\fundsolcomplper_{il}(t-s,x-y)\,\rhstpgenerictensor_{lj}(s,y)\,\dy\ds.
\end{align*}
One may verify directly from definition \eqref{PurelyPeriodicTimePeriodicFundsolDef} of $\fundsolcomplper$ the scaling property
\begin{equation}
\fundsolcomplper(t,x) = \fundsolcomplone(\per^{-1}t,\per^{-\half}x).
\label{funda-scaling}
\end{equation}
Inserting the above into the convolution integral, we obtain after a change of variables
\begin{align*}
\partial_j\fundsolcomplper_{il}*\rhstpgenerictensor_{lj}\,(t,x) 
&= \per^{-\half} \int_{\torusone}\int_{\R^2}  
\partial_j\fundsolcomplone_{il}(\per^{-1}t-s,\per^{-\half}x-\per^{-\half}y)\,\rhstpgenerictensor_{lj}(\per s, y)\,\dy\ds.
\end{align*}
We can thus employ H\"older's inequality and \eqref{FundsolComplPointwiseDecayLem_PointwisedecayEstGrad} for any $\gamma\in(0,1)$ and $p\in\bp{1,\frac{1}{1-\gamma}}$
to estimate
\begin{align*}
&\snorm{\partial_j\fundsolcomplper_{il}*\rhstpgenerictensor_{lj}\,(t,x)}\\
&\qquad\leq  
\per^{-\half} \int_{\R^2} \norm{\partial_j\fundsolcomplone_{il}(\cdot,\per^{-\half}x-\per^{-\half}y)}_{\LR{p}\np{\torusone}}\,
\norm{\rhstpgenerictensor_{lj}}_{\LR{\infty}\np{\torusper;\wspace{2+\delta}}}
\,\bp{1+\snorm{y}}^{-(2+\delta)}\,\dy\\
&\qquad\leq  
\Cc{c}\,
\per^\gamma
\int_{\R^2} 
\snorm{x-y}^{-(1+2\gamma)}
\,\norm{\rhstpgenerictensor}_{\LR{\infty}\np{\torusper;\wspace{2+\delta}}}
\,\bp{1+\snorm{y}}^{-(2+\delta)}\,\dy.
\end{align*}
We now choose 
$\gamma=\frac{\delta}{2}$  in the estimate above
to obtain
\begin{align*}
\snorm{\partial_j\fundsolcomplper_{il}*\rhstpgenerictensor_{lj}\,(t,x)}
&\leq  
\Cc{c}\,\per^{\frac{\delta}{2}}\,\norm{\rhstpgenerictensor}_{\LR{\infty}\np{\torusper;\wspace{2+\delta}}} 
\int_{\R^2} \snorm{x-y}^{-(1+\delta)}
\,\bp{1+\snorm{y}}^{-(2+\delta)}\,\dy\\
&\leq \Cc{c}\,\per^{\frac{\delta}{2}}\, \norm{\rhstpgenerictensor}_{\LR{\infty}\np{\torusper;\wspace{2+\delta}}}\, \bp{1+\snorm{x}}^{-(1+\delta)},
\end{align*}
which implies \eqref{ConvolutionWithPurelyPeriodicFundSolLemma_GradEstimate}.
Estimate
\eqref{ConvolutionWithPurelyPeriodicFundSolLemma_Estimate} 
can be shown in a completely similar manner by using \eqref{FundsolComplPointwiseDecayLem_PointwisedecayEst} with 
$\gamma=\frac{1+\delta}{2}$ and $p\in (1,\frac{2}{1-\delta})$
instead of 
\eqref{FundsolComplPointwiseDecayLem_PointwisedecayEstGrad}.
\end{proof}

\section{Linear Problem} 
\label{linear-problem}

In this section we consider the following linearized system in a rotating frame of reference:
\begin{align}\label{StokesRotFrameWholeSpace}
\begin{pdeq}
&-\Delta\velrotstokes - \angvel(\nsnonlinb{\xrot}{\velrotstokes}-\velrotstokesrot) + \grad\presrotstokes = \rhsrot + \Div\rhsrottensor && \tin\R^2,\\
&\Div\velrotstokes = 0 && \tin \R^2.
\end{pdeq}
\end{align}
Due to efforts of several authors
(\cite{FHM}, \cite{GKy-1}, \cite{GKy-2}, \cite{HMN}, \cite{Hi06}, \cite{Hi}),
we already know the existence of a unique solution under appropriate
conditions on the external force.
Here, we focus on the external force with vanishing torque,
which implies better decay properties of the solution.
This was pointed out first by Hishida \cite[Proposition 5.3.2]{Hi}
through the asymptotic representation of the solution to
\eqref{StokesRotFrameWholeSpace}.
Our task is now to establish a pointwise estimate of the solution
that optimally captures the singular behavior for $a\to 0$.
If in particular $f$ is compactly supported, the singular behavior
$|a|^{-(1+\delta)/2}$ in \eqref{linear-est} below
for $a\to 0$ has been deduced first by Higaki, Maekawa and Nakahara
\cite[Theorem 3.1 (i)]{HMN}.
For the external force of divergence form, the singular behavior
$|a|^{-\delta/2}$ for $a\to 0$ is not explicitly found in
\cite[Theorem 3.1 (ii)]{HMN}, however, it is hidden there.
Note that the following assertion is not valid for $\delta=0$.
\begin{thm}
Let $\delta\in(0,1)$ and $\angvel \in\mathbb R\setminus\{0\}$.
Suppose that
\[
\rhsrot\in\wspace{3+\delta}(\R^2)^{2}, \qquad
\rhsrottensor\in\wspace{2+\delta}(\R^2)^{2\times 2}
\]
and that
\begin{equation}
\int_{\mathbb R^2}y^\perp\cdot f\,\dy
+\int_{\mathbb R^2}(F_{12}-F_{21})\,\dy=0,
\label{torque-free}
\end{equation}
then there is a unique solution $\velrotstokes\in\wspace{1+\delta}(\R^2)^2$ to 
\eqref{StokesRotFrameWholeSpace} which satisfies
\begin{align}
\norm{\velrotstokes}_{\wspace{1+\delta}}\leq \Cc[StokesRotFrameWholeSpaceThmConst]{C} 
\Bp{
\bp{1+|\angvel|^{-\frac{1+\delta}{2}}}\norm{\rhsrot}_{\wspace{3+\delta}}+
\bp{1+|\angvel|^{-\frac{\delta}{2}}}\norm{\rhsrottensor}_{\wspace{2+\delta}}
},
\label{linear-est}
\end{align}
where $\const{StokesRotFrameWholeSpaceThmConst}=\const{StokesRotFrameWholeSpaceThmConst}(\delta)$ is independent of $\angvel$.
\label{linear-thm}
\end{thm}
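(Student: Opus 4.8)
The plan is to transfer \eqref{StokesRotFrameWholeSpace} to an inertial frame, where a steady solution becomes $\per$-time-periodic with $\per:=2\pi/\snorm{\angvel}$, split the time-periodic solution into its time-average and a purely periodic remainder, and estimate the two pieces by Lemma~\ref{SteadyStokesRepresentationFundsol} and Lemma~\ref{ConvolutionWithPurelyPeriodicFundSolLemma}, respectively. Following \eqref{trans} I would set $V(t,y):=\rotmatrix(t)\velrotstokes(\rotmatrix(t)^\top y)$, $\Pi(t,y):=\presrotstokes(\rotmatrix(t)^\top y)$, $\mathcal F(t,y):=\rotmatrix(t)\rhsrot(\rotmatrix(t)^\top y)$ and $\widetilde{\rhsrottensor}(t,y):=\rotmatrix(t)\rhsrottensor(\rotmatrix(t)^\top y)\rotmatrix(t)^\top$, so that $\{V,\Pi\}$ solves the $\per$-time-periodic Stokes problem $\partial_tV-\Delta V+\grad\Pi=\mathcal F+\Div\widetilde{\rhsrottensor}$, $\Div V=0$, and, conversely, a time-periodic solution of this type — invariant under the rotation action — descends to a steady solution of \eqref{StokesRotFrameWholeSpace}. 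Writing $V=V_{\mathrm s}+V_{\mathrm{pp}}$ with $V_{\mathrm s}:=\int_{\torusper}V(t,\cdot)\,\dt$ and splitting $\mathcal F,\widetilde{\rhsrottensor}$ accordingly, I would first record two facts: since $\int_{\torusper}\rotmatrix(t)\,\dt=0$ (integration over a full period), a change of variables $z=\rotmatrix(t)^\top y$ gives the crucial identity $\int_{\R^2}\mathcal F_{\mathrm s}\,\dy=\bp{\int_{\torusper}\rotmatrix(t)\,\dt}\int_{\R^2}\rhsrot\,\dy=0$; and $\mathcal F_{\mathrm s},\widetilde{\rhsrottensor}_{\mathrm s},\mathcal F_{\mathrm{pp}},\widetilde{\rhsrottensor}_{\mathrm{pp}}$ inherit the pointwise decay of $\rhsrot,\rhsrottensor$. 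By \eqref{intro_ConvolutionWithTPFundsol2}, $V_{\mathrm s}=\fundsolstokesvel\convolutioneuclidean(\mathcal F_{\mathrm s}+\Div\widetilde{\rhsrottensor}_{\mathrm s})$ and $V_{\mathrm{pp}}=\fundsolcomplper\convolutionpergroup(\mathcal F_{\mathrm{pp}}+\Div\widetilde{\rhsrottensor}_{\mathrm{pp}})$.

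\textbf{Steady part.} Because $\int_{\R^2}\mathcal F_{\mathrm s}=0$ and $\mathcal F_{\mathrm s}\in\wspace{3+\delta}$, the tensor $G:=\grad\fundsollaplace\convolutioneuclidean\mathcal F_{\mathrm s}$ (with $\fundsollaplace$ the Laplace fundamental solution) satisfies $\Div G=\mathcal F_{\mathrm s}$ and — subtracting the vanishing constant $\int_{\R^2}\mathcal F_{\mathrm s}$ before estimating the kernel — $\norm{G}_{\wspace{2+\delta}}\leq C\norm{\mathcal F_{\mathrm s}}_{\wspace{3+\delta}}$, where $\delta>0$ is used for integrability. Thus $V_{\mathrm s}$ solves the steady Stokes system with data $\Div\bp{\widetilde{\rhsrottensor}_{\mathrm s}+G}$, $\widetilde{\rhsrottensor}_{\mathrm s}+G\in\wspace{2+\delta}(\R^2)^{2\times2}$, and \eqref{SteadyStokesRepresentationFundsol_TensorRep} yields
\[
V_{\mathrm s,i}(x)=\mathcal M_{lj}\,\partial_j\fundsolstokesvel_{il}(x)+O\bp{\snorm{x}^{-(1+\delta)}},\qquad
\mathcal M:=\int_{\R^2}\bp{\widetilde{\rhsrottensor}_{\mathrm s}+G}\,\dy,
\]
with remainder bounded by $C\bp{\norm{\rhsrottensor}_{\wspace{2+\delta}}+\norm{\rhsrot}_{\wspace{3+\delta}}}$. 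A direct computation with the averaging maps $S\mapsto\int_{\torusper}\rotmatrix(t)S\rotmatrix(t)^\top\,\dt$ and $\mathcal F_{\mathrm s}\mapsto\int_{\R^2}\mathcal F_{\mathrm s}\otimes y\,\dy$ shows that $\mathcal M$ has vanishing symmetric trace-free part, so $\mathcal M=cI+\mathcal M^{\mathrm{skew}}$: the identity part drops out of $\mathcal M_{lj}\partial_j\fundsolstokesvel_{il}$ because $\partial_l\fundsolstokesvel_{il}\equiv0$, while $\mathcal M^{\mathrm{skew}}_{lj}\partial_j\fundsolstokesvel_{il}$ is proportional to $\mathcal M_{12}-\mathcal M_{21}$, which integration by parts identifies with $\int_{\R^2}y^\perp\cdot\rhsrot\,\dy+\int_{\R^2}(\rhsrottensor_{12}-\rhsrottensor_{21})\,\dy$ — and this vanishes by \eqref{torque-free}. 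Hence $V_{\mathrm s}\in\wspace{1+\delta}(\R^2)^2$ with $\norm{V_{\mathrm s}}_{\wspace{1+\delta}}\leq C\bp{\norm{\rhsrot}_{\wspace{3+\delta}}+\norm{\rhsrottensor}_{\wspace{2+\delta}}}$, uniformly in $\angvel$.

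\textbf{Purely periodic part and conclusion.} Writing $V_{\mathrm{pp},i}=\fundsolcomplper_{il}\convolutionpergroup\mathcal F_{\mathrm{pp},l}+\partial_j\fundsolcomplper_{il}\convolutionpergroup\widetilde{\rhsrottensor}_{\mathrm{pp},lj}$ and using $\rhsrot\in\wspace{3+\delta}\subset\wspace{2+\delta}$, $\rhsrottensor\in\wspace{2+\delta}$, I would apply \eqref{ConvolutionWithPurelyPeriodicFundSolLemma_Estimate} to the first term and \eqref{ConvolutionWithPurelyPeriodicFundSolLemma_GradEstimate} to the second to obtain, with $\per=2\pi/\snorm{\angvel}$,
\[
\norm{V_{\mathrm{pp}}}_{\LR{\infty}(\torusper;\wspace{1+\delta})}\leq C\,\per^{\frac{1+\delta}{2}}\norm{\rhsrot}_{\wspace{3+\delta}}+C\,\per^{\frac{\delta}{2}}\norm{\rhsrottensor}_{\wspace{2+\delta}}\leq C\snorm{\angvel}^{-\frac{1+\delta}{2}}\norm{\rhsrot}_{\wspace{3+\delta}}+C\snorm{\angvel}^{-\frac{\delta}{2}}\norm{\rhsrottensor}_{\wspace{2+\delta}}.
\]
Adding the two pieces, $V\in\LR{\infty}(\torusper;\wspace{1+\delta})$; transferring back via $\velrotstokes(x)=\rotmatrix(t)^\top V(t,\rotmatrix(t)x)$, which is $t$-independent by the rotational invariance noted above, gives the solution $\velrotstokes\in\wspace{1+\delta}(\R^2)^2$ with $\norm{\velrotstokes}_{\wspace{1+\delta}}=\norm{V}_{\LR{\infty}(\torusper;\wspace{1+\delta})}$, which is \eqref{linear-est} (all constants depend only on $\delta$, through those of the two lemmas). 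For uniqueness, the difference of two solutions lies in $\wspace{1+\delta}$ and hence decays at infinity; transferring it to the inertial frame and taking the time-Fourier transform, each nonzero mode solves an unforced resolvent Stokes problem on $\R^2$ and therefore vanishes, while the zero mode is an unforced steady Stokes flow on $\R^2$ decaying at infinity, hence zero by the Stokes Liouville theorem in the plane.

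\textbf{Main obstacle.} The crux is the vanishing of the leading $O(\snorm{x}^{-1})$ Stokes profile in $V_{\mathrm s}$: this profile is a priori a two-vector-valued function, so its annihilation from the single scalar condition \eqref{torque-free} looks over-determined. The point is that averaging over the rotation kills the symmetric trace-free part of the relevant first moment of the transferred force, the incompressibility $\partial_l\fundsolstokesvel_{il}\equiv0$ kills the trace part, and precisely the antisymmetric — i.e. torque — component survives, which is what \eqref{torque-free} controls; this is the mechanism behind the resolution of the Stokes paradox. A secondary technical point is that the hypothesis only provides $\rhsrot\in\wspace{3+\delta}$, not $\wspacelog{3+\delta}$, which forces one to rewrite $\mathcal F_{\mathrm s}$ as a divergence (using $\int\mathcal F_{\mathrm s}=0$) so that the log-free representation \eqref{SteadyStokesRepresentationFundsol_TensorRep}, rather than the $\wspacelog{}$-dependent \eqref{SteadyStokesRepresentationFundsol_Rep}, applies; carrying the period $\per$ through Lemma~\ref{ConvolutionWithPurelyPeriodicFundSolLemma} together with $\per\sim\snorm{\angvel}^{-1}$ then produces the sharp singular factors $\snorm{\angvel}^{-(1+\delta)/2}$ and $\snorm{\angvel}^{-\delta/2}$, and also explains why the statement fails at $\delta=0$.
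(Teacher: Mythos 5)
Your overall route is the same as the paper's: pass to the inertial frame where the steady solution becomes $\per$-periodic with $\per=2\pi/\snorm{\angvel}$, split into the time average plus the purely periodic remainder, treat the average with Lemma~\ref{SteadyStokesRepresentationFundsol} (with the rotational averaging plus solenoidality of $\fundsolstokesvel$ plus \eqref{torque-free} killing the $O(\snorm{x}^{-1})$ profile) and the remainder with Lemma~\ref{ConvolutionWithPurelyPeriodicFundSolLemma}, so that $\per\sim\snorm{\angvel}^{-1}$ produces exactly the factors $\snorm{\angvel}^{-(1+\delta)/2}$ and $\snorm{\angvel}^{-\delta/2}$. The purely periodic estimate, the identification of the skew (torque) component as the only survivor of the averaging, and the uniqueness argument are all in order.

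There is, however, a genuine gap in your treatment of the non-divergence part of the force in the steady piece. You set $G:=\grad\fundsollaplace\convolutioneuclidean\mathcal F_{\mathrm s}$ and claim $\norm{G}_{\wspace{2+\delta}}\leq C\norm{\mathcal F_{\mathrm s}}_{\wspace{3+\delta}}$ using only $\int_{\R^2}\mathcal F_{\mathrm s}\,\dy=0$. This is false in general: subtracting the vanishing zeroth moment controls only that term, while the first moment $\int y\otimes\mathcal F_{\mathrm s}\,\dy$ does not vanish (after averaging it still contains a trace part and a torque part), so $G(x)$ behaves like $\grad^2\fundsollaplace(x)$ contracted with that moment, i.e.\ decays exactly like $\snorm{x}^{-2}$. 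Consequently $G\notin\wspace{2+\delta}$, the coefficient $\mathcal M=\int_{\R^2}\bp{\widetilde{\rhsrottensor}_{\mathrm s}+G}\,\dy$ is not absolutely convergent, \eqref{SteadyStokesRepresentationFundsol_TensorRep} is not applicable to $\widetilde{\rhsrottensor}_{\mathrm s}+G$, and the integration by parts identifying $\mathcal M_{12}-\mathcal M_{21}$ with the torque is only formal at that decay rate (the boundary terms on large circles are $O(1)$). Note also that simply feeding $\mathcal F_{\mathrm s}\in\wspace{3+\delta}$ into \eqref{SteadyStokesRepresentationFundsol_Rep} is not an option, since that representation needs the $\wspacelog{3+\delta}$ weight and otherwise loses a logarithm. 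The paper circumvents this precisely at this point by the decomposition lemma of Kang--Miura--Tsai (\cite[Lemma 2.5]{KMT}): write $\rhsrot=f_0+\Div F_0$ with $f_0$ bounded and supported in $B_1(0)$ (carrying the full integral of $\rhsrot$) and $F_0\in\wspace{2+\delta}$ with $\norm{F_0}_{\wspace{2+\delta}}\leq C\norm{\rhsrot}_{\wspace{3+\delta}}$; compact support makes the averaged force trivially an element of $\wspacelog{3+\delta}$, so both parts of Lemma~\ref{SteadyStokesRepresentationFundsol} apply with the same $\delta$, and the same $f_0$ also yields the $\per^{(1+\delta)/2}$ factor through \eqref{ConvolutionWithPurelyPeriodicFundSolLemma_Estimate}. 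Your argument becomes correct if you replace the Laplace-potential device by such a decomposition (or by any construction that removes both the zeroth and first moments into a compactly supported piece); as written, the steady-part estimate does not go through.
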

\begin{proof}
By \cite[Lemma 5.3.5]{Hi} the solution to
\eqref{StokesRotFrameWholeSpace}
is unique within the class of tempered distributions
up to additive (specific) polynomials, and thus unique
within $X_{1+\delta}(\mathbb R^2)^2$.
Since $f\in \wspace{3+\delta}(\mathbb R^2)^2\subset L^p(\mathbb R^2)^2$ and
$F\in \wspace{2+\delta}(\mathbb R^2)^{2\times 2}
\subset L^p(\mathbb R^2)^{2\times 2}$
for every $p\in (1,\infty)$, the argument from
\cite{FHM} and \cite{Hi06} 
yields the existence of a solution $v(x)$ (their argument is valid for 2D
as well, see also \cite{GKy-1} and \cite{GKy-2}).
It can be represented as
the volume potential of $f$ in terms of the associated fundamental solution,
see \cite[Proposition 5.3.2]{Hi} and \cite[Theorem 3.1]{HMN}.
Their analysis of the fundamental solution is very precise, however,
it is convenient to adopt another representation
of the solution $v(x)$ obtained above,
see \eqref{relation}--\eqref{StokesRotFrameWholeSpaceThm_DecompOfTPVel}
below, in order to deduce the desired estimate \eqref{linear-est}.
By \cite[Lemma 2.5]{KMT} one can
decompose $f\in\wspace{3+\delta}(\mathbb R^2)^2$
in the form $f=f_0+\mbox{div $F_0$}$ with
\[
\mbox{Supp $f_0$}\subset B_1(0), \qquad
f_0\in L^\infty(\mathbb R^2)^2, \qquad
F_0\in \wspace{2+\delta}(\mathbb R^2)^{2\times 2},
\]
subject to
\begin{equation}
\begin{split}
&\|f_0\|_\infty\leq C\left|\int_{\mathbb R^2} f(y)\dy\right|
\leq C\|f\|_{\wspace{2+\delta}}, \\
&\|F_0\|_{\wspace{2+\delta}}\leq C\|f\|_{\wspace{3+\delta}},
\end{split}
\label{further-split}
\end{equation}
where $\|\cdot\|_\infty$ denotes the norm of $L^\infty(\mathbb R^2)$.
The external force is then rewritten as
\[
f+\mbox{div $F$}=f_0+\mbox{div $G$}, \qquad G:=F+F_0.
\]
Put $\per:=\frac{2\pi}{|a|}$
and
\begin{align*}
&\rhstpgeneric:\torusper\times\R^2\ra\R^2,\quad 
\rhstpgeneric(t,x)\coloneqq\rotmatrix(t) f_0\bp{\rotmatrix(t)^\transpose x},\\
&\rhstpgenerictensor:\torusper\times\R^2\ra\R^{2\times 2},\quad 
\rhstpgenerictensor(t,x)\coloneqq
\rotmatrix(t) G\bp{\rotmatrix(t)^\transpose x}\rotmatrix(t)^\transpose,
\end{align*}
where the matrix $\rotmatrix(t)$ is given by \eqref{rot-mat}.
Clearly, 
$\rhstpgeneric\in\LR{\infty}\bp{\torusper; L^\infty(\R^2)}^2$
with support being compact in $B_1(0)$ for each $t$ and
$\rhstpgenerictensor\in\LR{\infty}\bp{\torusper;\wspace{2+\delta}(\R^2)}^{2\times 2}$.
One readily verifies (recall \eqref{intro_ConvolutionWithTPFundsol2}) that the distribution $\veltp\in\TDR(\torusper\times\R^2)^2$ defined by $(i=1,2)$
\begin{align*}
\veltp_i:=\fundsoltpvelper_{il}\convolutionpergroup \rhstpgeneric_{l} 
+ \partial_j\fundsoltpvelper_{il}\convolutionpergroup \rhstpgenerictensor_{lj}
\end{align*}
is a solution to 
\begin{align}\label{StokesRotFrameWholeSpaceThm_TPSystem}
\begin{pdeq}
&\partial_t\veltp-\Delta\veltp + \grad\prestp = \rhstpgeneric + \Div\rhstpgenerictensor&& \tin\grp, \\
&\Div\veltp =0 && \tin\grp.
\end{pdeq}
\end{align}
As in the derivation of \eqref{NS} by use of \eqref{trans} together with
the uniqueness for \eqref{StokesRotFrameWholeSpace} mentioned above,
we have the relations
\begin{equation}
w(t,x)=Q(t)v(Q(t)^\top x), \qquad
v(x)=Q(t)^\top w(t,Q(t)x).
\label{relation}
\end{equation}
Recalling \eqref{TimePeriodicFundsolDef}, we find that 
\begin{align}\label{StokesRotFrameWholeSpaceThm_DecompOfTPVel}
\begin{aligned}
\veltp_i&= 
\bp{\fundsolstokesvel\otimes \onedist_{\torus_\per}}_{il}\convolutionpergroup \rhstpgeneric_{l} 
+ \partial_j\bp{\fundsolstokesvel\otimes \onedist_{\torus_\per}}_{il}\convolutionpergroup \rhstpgenerictensor_{lj}
+ \fundsolcomplper_{il}\convolutionpergroup \rhstpgeneric_{l}
+ \partial_j\fundsolcomplper_{il}\convolutionpergroup \rhstpgenerictensor_{lj}\\
&= 
w^S_i
+ \fundsolcomplper_{il}\convolutionpergroup \rhstpgeneric_{l}
+ \partial_j\fundsolcomplper_{il}\convolutionpergroup \rhstpgenerictensor_{lj},
\end{aligned}
\end{align}
where
\begin{equation}
\label{DefOfWSi}
w^S_i
:=\fundsolstokesvel_{il}\convolutioneuclidean \rhstpproj_{l}
+ {\partial_j\fundsolstokesvel}_{il}\convolutioneuclidean \rhstpprojtensor_{lj}
\end{equation}
with
\begin{align*}
\rhstpproj(x) := \int_{\torusper} \rhstpgeneric(t,x)\,\dt,\quad 
\rhstpprojtensor(x) := \int_{\torusper} \rhstpgenerictensor(t,x)\,\dt,
\end{align*}
which do not depend on $a\in\mathbb R\setminus\{0\}$ because of
\begin{equation*}
\overline{h}(x)
=\frac{1}{\cal T}\int_0^{\cal T}Q(t)f_0(Q(t)^\top x)\,\dt
=\frac{1}{2\pi}\int_0^{2\pi}Q_1(t)f_0(Q_1(t)^\top x)\,\dt
\end{equation*}
and the same description of $\overline{H}(x)$,
where $Q_1(t)$ denotes the matrix \eqref{rot-mat} with $a=1$.
As a consequence, $w^S(x)$ does not depend on the angular velocity $a$.
We compute
\begin{equation}
\int_{\R^2}\rhstpproj(y)\,\dy 
= \int_{\torusper} \int_{\R^2} \rotmatrix(t)\rhsrot_0\bp{\rotmatrix(t)^\transpose y}\,\dy\dt   
= \int_{\torusper} \rotmatrix(t) \int_{\R^2} \rhsrot_0\bp{ y}\,\dy\dt = 0  
\label{1st-coe}
\end{equation}
and 
\begin{equation*}
\begin{split}
\int_{\R^2}\rhstpproj(y)\otimes y \,\dy 
&=\int_{\torusper} \int_{\R^2} \bb{\rotmatrix(t)\rhsrot_0(y)}\otimes\bb{\rotmatrix(t) y}\,\dy\dt  \\
&=\half\int_{\mathbb R^2}y\cdot \rhsrot_0\,\dy 
\begin{pmatrix}
1 & 0 \\
0 & 1
\end{pmatrix}
+\half\int_{\mathbb R^2}y^\perp\cdot \rhsrot_0\,\dy  
\begin{pmatrix}
0 & -1\\
1 & 0
\end{pmatrix}.
\end{split}
\end{equation*}
Moreover, letting $\intmatrix{G}:=\int_{\R^2}G(y)\,\dy$,
we find that 
\begin{equation*}
\begin{split}
\int_{\R^2}\rhstpprojtensor(y)\,\dy 
&= \int_{\torusper} \rotmatrix(t)\intmatrix{G}\rotmatrix(t)^\transpose\,\dt
= \half
\begin{pmatrix}
\trace\intmatrix{G} & \intmatrix{G}_{12}-\intmatrix{G}_{21}\\
\intmatrix{G}_{21}-\intmatrix{G}_{12} & \trace\intmatrix{G}
\end{pmatrix}   \\
&=\half\trace\intmatrix{G} 
\begin{pmatrix}
1 & 0 \\
0 & 1
\end{pmatrix}
-\half (\intmatrix{G}_{12}-\intmatrix{G}_{21})
\begin{pmatrix}
0 & -1 \\
1 & 0
\end{pmatrix}\\
&=\half\trace\intmatrix{G} 
\begin{pmatrix}
1 & 0 \\
0 & 1
\end{pmatrix}
-\half \Bp{
 (\intmatrix{F}_{12}-\intmatrix{F}_{21})
+\int_{\R^2}y^\perp\cdot \Div F_0\,\dy
}
\begin{pmatrix}
0 & -1 \\
1 & 0
\end{pmatrix}.
\end{split}
\end{equation*}
Due to the Stokes fundamental solution being solenoidal and by the assumption
\eqref{torque-free},
the computations above imply that
\begin{align*}
\left\{-\Bb{\int_{\R^2}\overline{h}(y)\otimes y\,\dy}_{lj}+
\Bb{\int_{\R^2}\rhstpprojtensor(y)\,\dy}_{lj}\right\}
\partial_j\fundsolstokesvel_{il}(x) = 0,
\end{align*}
which together with \eqref{1st-coe}
as well as Lemma \ref{SteadyStokesRepresentationFundsol} leads to
\begin{equation}
\sup_{|x|\geq e}|x|^{1+\delta}|w^S(x)|
\leq C(\|\overline{h}\|_{\wspacelog{3+\delta}}
+\|\overline{H}\|_{X_{2+\delta}})
\leq C(\|f_0\|_\infty+\|G\|_{X_{2+\delta}}).
\label{w-steady}
\end{equation}
On the other hand, it is easily derived from \eqref{DefOfWSi} and basic estimates of $\fundsolstokesvel$ that  
\[
\sup_{|x|<e}|w^S(x)|
\leq C(\|f_0\|_\infty+\|G\|_{X_{2+\delta}}).
\]
Combining this with \eqref{w-steady}, we find
\begin{equation}
\|w^S\|_{X_{1+\delta}}
\leq C(\|f_0\|_\infty+\|G\|_{X_{2+\delta}}),
\label{steady-est} 
\end{equation}
where the constant $C>0$ is independent of the angular velocity
$a\in\mathbb R\setminus\{0\}$
since so is $w^S(x)$ as mentioned above.

Returning to \eqref{StokesRotFrameWholeSpaceThm_DecompOfTPVel}, we collect
\eqref{further-split}, \eqref{steady-est} and
Lemma \ref{ConvolutionWithPurelyPeriodicFundSolLemma} to conclude that 
\begin{equation*}
\begin{split}
\norm{\veltp}_{\LR{\infty}\np{\torusper;\wspace{1+\delta}}}
&\leq C
\Bp{
\bp{1+|\angvel|^{-\frac{1+\delta}{2}}}\|f_0\|_\infty+
\bp{1+|\angvel|^{-\frac{\delta}{2}}}\|F+F_0\|_{\wspace{2+\delta}}
} \\
&\leq C
\Bp{
\bp{1+|\angvel|^{-\frac{1+\delta}{2}}}\norm{\rhsrot}_{\wspace{3+\delta}}+
\bp{1+|\angvel|^{-\frac{\delta}{2}}}\norm{\rhsrottensor}_{\wspace{2+\delta}}
},
\end{split}
\end{equation*}
which leads to \eqref{linear-est} in view of \eqref{relation}.
\end{proof}



\section{Proof of Theorem \ref{main}}
\label{proof-main}
\begin{proof}
Let us fix $\psi\in C^\infty([0,\infty))$ such that
$\psi(\rho)=1$ for $0\leq\rho\leq 4/3$ and
$\psi(\rho)=0$ for $\rho\geq 5/3$.
Given $R\in (e,\infty)$ satisfying
$\mathbb R^2\setminus\Omega\subset B_R(0)$,
we set $\varphi(x)\coloneqq\psi(|x|/R)$ for $x\in\mathbb R^2$. By
$\mathbb B$ we denote the Bogovskii operator that associates a function $h$ with vanishing mean value over $A$ with a particular
solution $H$ constructed by Bogovskii \cite{Bog}
(see also \cite{BS} and \cite[Chapter III.3]{G-b}) to the boundary value problem for the
divergence equation $\Div H=h$ in the annulus
$$A:=\setc{x\in\R^2}{R<\snorm{x}<2R}$$
subject to the homogeneous Dirichlet boundary condition, \textit{i.e.}, $\Div\mathbb B\nb{h}=h$ in $A$ and
$\mathbb B\nb{h}=0$ on $\partial A$ provided $\int_A h\,\dx=0$.

Given a solution $\{u,p\}$ to \eqref{NS} satisfying \eqref{zero-flux} and decaying like $u(x)=O(|x|^{-1})$ as $|x|\to\infty$,
we set
\begin{equation*}
\begin{split}
&\widetilde u \coloneqq(1-\varphi)u+\mathbb B[u\cdot\nabla\varphi], \qquad
\widetilde p \coloneqq(1-\varphi)p, \\
&\widetilde U \coloneqq(1-\varphi)U+\mathbb B[U\cdot\nabla\varphi], \qquad
\widetilde P \coloneqq(1-\varphi)P, 
\end{split}
\end{equation*}
where
\begin{align}
U(x) \coloneqq \frac{M}{4\pi} \frac{x^\perp}{|x|^2}, \qquad
P(x) \coloneqq \Bp{\frac{M}{4\pi}}^2 \frac{-1}{2|x|^2} 
\label{candidate}
\end{align}
is the flow discussed in Section \ref{intro}, see \eqref{circu},
and the constant $M$ is defined by \eqref{torque}.
The reason of this choice $c \coloneqq M/(4\pi)$ in \eqref{circu} is clarified later.
Note that
$\int_A u\cdot\nabla\varphi\, dx=0$ follows from \eqref{zero-flux},
while a direct computation yields
$$\int_A U\cdot\nabla\varphi\,dx= \int_A \Div\bb{\varphi U}\,\dx = 
\int_{|x|=R}\frac{-x}{R}\cdot U\,d\sigma=0.$$
By well-known estimates of the Bogovskii operator (see \cite{Bog},
\cite{BS} and \cite[Chapter III.3]{G-b}),
we have
$\widetilde u,\, \widetilde U\in X_{1}$ with
\begin{equation}
\|\widetilde u\|_{X_1}\leq C\sup_{|x|\geq R}|x||u(x)|, \qquad
\|\widetilde U\|_{X_1}\leq C|M|,
\label{est-tilde} 
\end{equation}
where $C>0$ is independent of $R$ on account of
dilation invariance of the constant in the $L^q$-estimate
due to \cite[Theorem 2.10]{BS} (see also \cite[Theorem III.3.1]{G-b}).

The pair 
\[
v:=\widetilde u-\widetilde U, \qquad
\pi:=\widetilde p-\widetilde P
\]
obeys
\begin{equation}
-\Delta v-a(x^\perp\cdot\nabla v-v^\perp)+\nabla\pi
=(1-\varphi)f+g+\mbox{div $J$}(v), \quad
\mbox{div $v$}=0
\label{NS2}
\end{equation}
in $\mathbb R^2$, where
\[
J(v)\coloneqq-(\widetilde u\otimes v+v\otimes\widetilde u)+v\otimes v
\]
and
\[
g \coloneqq h(u,p)-h(U,P)
\]
with
\begin{equation*}
\begin{split}
h(u,p)
& \coloneqq2\nabla\varphi\cdot\nabla u
+(\Delta\varphi +ax^\perp\cdot\nabla\varphi)u
-\Delta\mathbb B[u\cdot\nabla\varphi]
-ax^\perp\cdot\nabla\mathbb B[u\cdot\nabla\varphi]  \\
&\quad +a\mathbb B[u\cdot\nabla\varphi]^\perp
-(\nabla\varphi)p 
+(1-\varphi)u\cdot\nabla
\left\{-\varphi u+\mathbb B[u\cdot\nabla\varphi]\right\}  \\
&\quad +\mathbb B[u\cdot\nabla\varphi]
\cdot\nabla\big\{(1-\varphi)u+\mathbb B[u\cdot\nabla\varphi]\big\}.
\end{split}
\end{equation*}
It is seen that $g\in C_0^\infty(A)$ and
\begin{equation} 
\sup_{x\in A}|g(x)| 
\leq c(R)\,(1+|a|)(|M|+N)
\label{est-g}
\end{equation}
with
\begin{align*}
N:= {\sup_{R<|x|<2R} \bp{\snorm{u(x)} + \snorm{\nabla u(x)} + \snorm{\nabla^2 u(x)} + \snorm{p(x)}}}
\end{align*}
for some constant $c(R)>0$,
which depends on $R$ but is independent of $a$,
provided both $N$ and $|M|$ are smaller than one 
(by choosing $\mu$ small enough in \eqref{small}) so that
$M^2\leq |M|$, $N^2\leq N$.

By essentially the same computation as in \cite[Section 5.4]{Hi}
we deduce
\begin{equation*}
\begin{split}
&\quad \int_{\mathbb R^2}y^\perp\cdot\{(1-\varphi)f+h(u,p)\}\,\dy \\
&=\int_{\mathbb R^2}y^\perp\cdot\{
-\Delta\widetilde u-a(y^\perp\cdot\nabla\widetilde u-\widetilde u^\perp)
+\nabla\widetilde p+\widetilde u\cdot\nabla\widetilde u\}\,\dy
=M,
\end{split}
\end{equation*}
where $M$ is given by \eqref{torque}.
Similarly, we find
\[
\int_{\mathbb R^2}y^\perp\cdot h(U,P)\,\dy
=\int_{|y|=1}y^\perp\cdot\{(T(U,P)+a\,U\otimes y^\perp-U\otimes U)\nu\}\,d\sigma,
\]
which we compute with use of the explicit form \eqref{circu} to show that
the last boundary integral is equal to $4\pi c$, where $c$ is the parameter
in \eqref{circu}.
We thus choose $c \coloneqq M/(4\pi)$ as in \eqref{candidate} to obtain
\[
\int_{\mathbb R^2}y^\perp\cdot\{(1-\varphi)f+g\}\,dy=0,
\]
which combined with symmetry $J_{12}(v)=J_{21}(v)$
enables us to reconstruct a solution
$V\in X_{1+\delta}(\mathbb R^2)^2$ to \eqref{NS2} satisfying
\begin{equation}
\|V\|_{X_{1+\delta}}\leq
L:=2\const{StokesRotFrameWholeSpaceThmConst}\,\bp{1+|a|^{-(1+\delta)/2}}\,\|(1-\varphi)f+g\|_{\wspace{3+\delta}}
\label{est-V}
\end{equation}
(together with the associated pressure $\Pi$)
under the smallness conditions \eqref{small}
by means of a fixed point argument based on Theorem \ref{linear-thm},
where $\const{StokesRotFrameWholeSpaceThmConst}=\const{StokesRotFrameWholeSpaceThmConst}(\delta)$ is as in this theorem.
In fact,
\begin{equation}
\begin{split}
L&\leq
C(1+|a|^{-(1+\delta)/2})\sup_{|x|\geq R}|x|^{3+\delta}|f(x)|  \\
&\quad +Cc(R)R^{3+\delta}(1+|a|^{-(1+\delta)/2})(1+|a|)(|M|+N)
\end{split}
\label{est-L} 
\end{equation}
follows from \eqref{est-g} and, thereby,
the conditions \eqref{small} with appropriate constants
$\kappa=\kappa(\delta)$, $\mu=\mu(\delta,R)$ imply that
$(1+|a|^{-\delta/2})L$ is sufficiently small;
as a consequence, we obtain a solution
$V\in X_{1+\delta}(\mathbb R^2)^2$ with \eqref{est-V}.

We now show that $V$ constructed above coincides with
$v=\widetilde u-\widetilde U$.
To this end, put
\[
w:=v-V, \qquad \sigma:=\pi-\Pi,
\]
which obey
\[-\Delta w-a(x^\perp\cdot\nabla w-w^\perp)+\nabla\sigma=\mbox{div $K$}(w),
\qquad\mbox{div $w$}=0 
\]
in $\mathbb R^2$ with
\[
K(w) \coloneqq-(\widetilde u\otimes w+w\otimes\widetilde u)+v\otimes w+w\otimes V.
\]
At the outset $K(w)$ only belongs to $X_2(\R^2)^{2\times 2}$, so Theorem \ref{linear-thm} is not applicable because the case $\delta=0$ is not admissible.
We therefore rely on the $L^q$-theory; indeed,
$K(w)\in L^q(\mathbb R^2)^{2\times 2}$ for every $q\in (1,\infty)$.
Let us fix $q\in (1,2)$.
The a priori estimate obtained in
\cite{Hi06} and \cite{GKy-2}
(where 3D case is discussed, but the argument is similar for 2D)
together with the Sobolev embedding relation yields
\begin{equation*}
\|w\|_{q_*,q}
\leq C\|\nabla w\|_q
\leq C\|K(w)\|_q   
\end{equation*}
with a constant $C$ independent of $a$,
since a simple scaling argument implies that the constant in the 
$L^q$-estimate for \eqref{linear} in $\mathbb R^2$ does not depend on $a$.
Here,
$\|\cdot\|_{q_*,q}$ with $q_*=2q/(2-q)$ and $\|\cdot\|_{2,\infty}$
denote the norms of the Lorentz spaces
$L^{q_*,q}(\mathbb R^2)$ and $L^{2,\infty}(\mathbb R^2)$, respectively,
while the $L^q$-norm is denoted by $\|\cdot\|_q$.
Further employing the Lorentz-H\"older inequality, we obtain
\begin{align*}
\|w\|_{q_*,q} &\leq C\big(\|\widetilde u\|_{2,\infty}+\|v\|_{2,\infty}
+\|V\|_{2,\infty}\big)\|w\|_{q_*,q}  \\ 
&\leq C\big(\|\widetilde u\|_{X_1}
+\|\widetilde U\|_{X_1}
+\|V\|_{X_{1+\delta}}\big)\|w\|_{q_*,q}.
\end{align*}
We thus conclude that $v=V$, yielding \eqref{structure},
whenever
$\|\widetilde u\|_{X_1}+\|\widetilde U\|_{X_1}+\|V\|_{X_{1+\delta}}$
is small enough.
This latter condition can be accomplished by
\eqref{small} (with still smaller $\kappa,\, \mu$)
on account of \eqref{est-tilde}, \eqref{est-V} and \eqref{est-L}.
\end{proof}




\end{document}